\newcommand{\cay} [1] {\mbox{Cay}(#1)}
\newcommand{\zn}{\mathbb{Z}_n}
\newtheorem{thm}{Theorem}[section]
\newtheorem{cor}[thm]{Corollary}
\newtheorem{lem}[thm]{Lemma}
\newtheorem{qstn}[thm]{Problem}
\theoremstyle{definition}
\theoremstyle{definition}
\newtheorem{exmp}{Example}[section]
\theoremstyle{remark}
\begin{document}

\begin{frontmatter}


\title{Isomorphic and Nonisomorphic, Isospectral Circulant Graphs \tnoteref{label1}}
 \tnotetext[label1]{Research was supported by a Reed College Undergraduate Research Initiative Grant.}
 \author{Julia Brown \fnref{label2}}
 \ead{julia.lazenby@alumni.reed.edu}
 \fntext[label2]{Results from an undergraduate thesis presented to Reed College in May 2008 under the advising of David Perkinson.}

\begin{abstract}
New criteria for which Cayley graphs of cyclic groups of any order can be completely determined--up to isomorphism--by the eigenvalues of their adjacency matrices is presented.  Secondly, a new construction for pairs of nonisomorphic Cayley graphs of cyclic groups with the same list of eigenvalues of their adjacency matrices will be presented.

\end{abstract}

\begin{keyword}

Graphs and matrices \sep Isomorphism problems
\MSC 05C50  \sep 05C60 



\end{keyword}

\end{frontmatter}


\section{Introduction}
\label{sec:intro}
Let \emph{spectrum} refer to the list eigenvalues of the of the adjacency matrix of a graph.  Two graphs are \emph{isospectral} if their spectra are the same.  We say that a graph, $X$, is a \emph{circulant graph} of \emph{order n} if it is a Cayley graph of a cyclic group of order $n$, written $\cay{\zn, S}$.  The set $S \subseteq \zn \backslash \{0\}$ is called the \emph{connection set} of $X$.  If $S$ is a multiset, rather than a set, of elements of $\zn$, then we say that $\cay{\zn, S}$ is a \emph{circulant multigraph} and $S$ is the \emph{connection multiset}.  This paper studies the spectra of circulant graphs and their relationship to graph isomorphisms.  

We say that a family of graphs can be characterized by its spectra if the only isospectral graphs in that family are also isomorphic.  \protect \cite{Spec} gives a list of a dozen different families of graphs that can be characterized by their spectra.  Although there has been a great deal of research dealing with the graph isomorphism problem for circulant graphs (see \protect \cite{Iso}, \protect \cite{Iso18}, \protect \cite{Iso27}, and \protect \cite{Iso35}), there is surprisingly little known about when circulant graphs can be characterized by their spectrum.  It was previously known that when circulant graphs are of prime order, their spectra determines them completely up to isomorphism.  There are also several examples proving that not all isospectral, circulant graphs must be isomorphic. (See \protect \cite{Elspas} and \protect \cite{Gods-small} for a few.)  However, that was all that was known.   The following theorem defines a new family of circulant graphs that can be characterized by their spectra.
\begin{thm}\label{thm:main}
Let $X$ be a circulant graph (or multigraph) of order $n = p_1^{r_1} p_2^{r_2} \cdots p_s^{r_s}$ where $p_1 < p_2 < \cdots <p_s$ are primes.  Let the size of the connection set (or multiset) of $X$ be $m$.  If $p_1 \geq m$ and either $s = 1$ or $p_2 > p_1 (m-1)$, then any circulant graph isospectral to $X$ must be isomorphic to $X$.
\end{thm}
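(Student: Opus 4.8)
The plan is to prove the stronger statement that, under the stated hypotheses, the spectrum of $\cay{\zn,S}$ determines $S$ up to multiplication by a unit of $\zn$. This suffices, since if $T=aS$ with $a\in\zn^{\ast}$ then the group automorphism $x\mapsto ax$ of $\zn$ is an isomorphism of graphs $\cay{\zn,S}\to\cay{\zn,T}$. Moreover, any circulant graph isospectral to $X$ has $n$ vertices and the same largest eigenvalue $m=\lambda_{0}$, so its connection multiset has size $m$, and the hypotheses---which involve only $n$ and $m$---apply to it as well. It therefore suffices to show that if $\cay{\zn,S}$ and $\cay{\zn,T}$ are isospectral then $T=aS$ for some $a\in\zn^{\ast}$.

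Recall that the eigenvalues of $\cay{\zn,S}$ are $\lambda_{j}=\sum_{s\in S}\omega^{js}$ for $j\in\zn$, where $\omega=e^{2\pi i/n}$, and that $\mathrm{Gal}(\mathbb{Q}(\omega)/\mathbb{Q})\cong\zn^{\ast}$ permutes them by $\lambda_{j}\mapsto\lambda_{aj}$. Sorting the indices $j$ by the order $d$ of $\omega^{j}$, one obtains for each $d\mid n$ the \emph{$d$th layer}
\[
\mathrm{Spec}_{d}(S)=\bigl\{\,\textstyle\sum_{s\in S}\zeta^{s}:\zeta\text{ a primitive }d\text{th root of unity}\,\bigr\},
\]
a multiset of $\varphi(d)$ algebraic integers in $\mathbb{Q}(\zeta_{d})$ that depends only on the image of $S$ in $\mathbb{Z}_{d}$ and that forms a single Galois orbit of uniform multiplicity. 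The first step is a lemma---established by a Galois-theoretic argument using that each layer is a full Galois orbit lying in $\mathbb{Q}(\zeta_{d})$---that the full spectrum determines every layer $\mathrm{Spec}_{d}$ individually. Consequently, isospectral circulants of order $n$ satisfy $\mathrm{Spec}_{d}(S)=\mathrm{Spec}_{d}(T)$ for all $d\mid n$.

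The second step treats the prime-power layers. Fix a prime $p=p_{i}$ and work in $\mathbb{Z}[\zeta_{p^{k}}]$, whose additive structure is governed by its power basis together with the single relation $\Phi_{p^{k}}(\zeta_{p^{k}})=0$. By induction on $k\le r_{i}$ I would show that equality of the layers $\mathrm{Spec}_{p}(S),\dots,\mathrm{Spec}_{p^{r_{i}}}(S)$ with those of $T$ forces $T\equiv a_{i}S\pmod{p^{r_{i}}}$ for some $a_{i}\in\mathbb{Z}_{p^{r_{i}}}^{\ast}$: a single representative eigenvalue of $\mathrm{Spec}_{p^{k}}(S)$, together with the residue counts of $S$ modulo $p^{k-1}$ (known inductively) and the value $m$, pins down $S\bmod p^{k}$, the only remaining freedom being the Galois action of $\mathbb{Z}_{p^{k}}^{\ast}$; one then checks these units can be chosen compatibly as $k$ increases. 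Here the hypothesis $p_{i}\ge p_{1}\ge m$ keeps the analysis in the range where the only cospectral coincidences among such $S$ are multiplication by units---it governs, for instance, the borderline situation in which $S$ lies inside a single coset of a cyclic subgroup and a block of roots of unity sums to zero. When $s=1$ this already gives $T=a_{1}S$, and the proof is complete.

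The final step glues the marginal data. We now know the image of $S$ in each factor $\mathbb{Z}_{p_{i}^{r_{i}}}$ of $\zn\cong\prod_{i}\mathbb{Z}_{p_{i}^{r_{i}}}$ up to units, and we must recover $S$ itself up to the action of $\zn^{\ast}=\prod_{i}\mathbb{Z}_{p_{i}^{r_{i}}}^{\ast}$; what is missing is how the coordinates of the elements of $S$ are matched across the prime-power factors. For this I would use the mixed layers $\mathrm{Spec}_{d}(S)$ with $d$ divisible by at least two primes, together with the fact that the lattice $\Lambda_{d}$ of integer linear relations among the $d$th roots of unity is generated by the rotates of the relations $\Phi_{p}(\zeta^{d/p})=0$, $p\mid d$. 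Writing $\mathrm{count}(S\bmod d)-\mathrm{count}(aT\bmod d)\in\Lambda_{d}$ and decomposing it into its $p$-typed summands, the coefficients of the summands of type $p_{2}$ (and of any larger prime dividing $d$) occur in runs of length at least $p_{2}$ whose relevant partial sums do not exceed $p_{1}(m-1)$ in absolute value; since $p_{2}>p_{1}(m-1)$, these coefficients must be constant. This collapses $\Lambda_{d}$ so that the counts of $S$ and $aT$ can differ only through $p_{1}$-typed relations, and a final descent---again using $p_{1}\ge m$, and handling the small cases $m\le 3$ (where the sums involved are too short to support the relevant relations) directly---eliminates those, yielding $T=aS$. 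I expect this last step to be the main obstacle: controlling $\Lambda_{d}$ when $d$ has several prime factors, some of them to higher powers, and verifying that one unit $a$ can be chosen to meet all the gluing constraints simultaneously. This is exactly where the hypotheses $p_{1}\ge m$ and $p_{2}>p_{1}(m-1)$ are consumed, and the bookkeeping for the interaction between the possibly thick $p_{1}$-power part of $\Lambda_{d}$ and the larger primes is the delicate point.
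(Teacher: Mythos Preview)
Your plan diverges sharply from the paper's argument, and Step~1 contains a genuine error. You assert as a lemma that the full spectrum of a circulant determines each layer $\mathrm{Spec}_d$ individually, justified by ``each layer is a full Galois orbit lying in $\mathbb{Q}(\zeta_d)$.'' But distinct layers can live in the same cyclotomic field---for instance $\mathbb{Q}(\zeta_p)=\mathbb{Q}(\zeta_{2p})$---so their Galois orbits can be exchanged between layers without altering the total multiset. The paper's own construction in Section~\ref{sec:two} is a counterexample: for $n=12$, $A=\{1,5,11\}$, $B=\{1,3,9\}$, the two circulants are isospectral, yet $\mathrm{Spec}_3(A)=\{\omega^4+2\omega^8,\;2\omega^4+\omega^8\}$ and $\mathrm{Spec}_3(B)=\{2+\omega^4,\;2+\omega^8\}$ are negatives of one another, not equal. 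Your lemma is therefore false as stated; it might hold under the theorem's hypotheses, but you do not invoke them here, and the Galois sketch you offer cannot distinguish these cases. With Step~1 gone, Steps~2 and~3---already only plans, with their main difficulties explicitly deferred---have nothing to stand on.

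The paper never decomposes the spectrum into layers. It uses a \emph{single} eigenvalue identity $\lambda_1=\mu_j$, rewritten in the group ring $\mathbb{Z}G$ as $\varphi(\alpha)=\varphi(\beta)$ with $\alpha=\sum z^{a_i}$, $\beta=\sum z^{jb_i}$. The key external input is Lam--Leung's structure theorem for vanishing sums of $n$th roots of unity: any element of $\mathbb{N}G\cap\ker\varphi$ with small enough support lies in $\sum_i \mathbb{N}G\,\sigma(P_i)$. Applying this to $\gamma=\beta+\alpha\,\sigma(P_1\setminus\{1\})$ and counting with the hypotheses $p_1\ge m$ and $p_2>p_1(m-1)$, one obtains (Corollary~\ref{cor:sameorsym}) that either $\alpha=\beta$, giving $A=jB$ at once, or both $\alpha$ and $\beta$ are coset sums $g\,\sigma(P_1)$. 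In the latter case a second application produces a multiplier $t$ with $\omega^{a_i}=\omega^{tb_i}$, and a short but nontrivial number-theoretic step upgrades $t$ to a unit $\tau\in\zn^{\ast}$. This direct route sidesteps both the layer separation and the CRT gluing that you flagged as the main obstacle.
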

Section \ref{sec:first} is devoted to proving Theorem \ref{thm:main}.  Section \ref{sec:two} of this paper is devoted to presenting a new construction for isospectral, nonisomorphic circulant graphs.  There are several methods for constructing isospectral, non-isomorphic graphs (see \protect \cite{Gods-const} for a good overview).  However, these methods do not apply to Cayley graphs.  Before 2005, the only known construction for isospectral, non-isomorphic Cayley graphs was due to Babai who gave examples for the dihedral group of order $2p$ where $p$ is prime (\protect \cite{Bab}).  In 2005, Lubotzky et al. published a construction for isospectral, non-isomorphic Cayley graphs of the group $\mbox{PSL}_d(\mathbb{F}_q)$ for every $d \geq 5$ ($d \neq 6$) and prime power $q>2$.  The construction presented in Section \ref{sec:two} of this paper is for circulant graphs on $2^rp$ vertices for any odd prime, $p$, and integer $r>2$.
%
%
\section{A New Spectral Characterization}\label{sec:first}

It is easy to verify that the adjacency matrix of any circulant graph will be circulant, meaning that the $i^{th}$ row of the adjacency matrix is the cyclic shift of the first row by $i-1$ to the right.  Since the adjacency matrices of circulant graphs have such a rigid structure, it is no surprise that there is a simple and elegant formula for the spectra of circulant graphs.  

\begin{thm}\label{thm:spec_formula}
If $X = \cay{\zn,S}$, then $Spec(X) = \{ \lambda_x \mid x \in \zn \}$ where
\begin{equation*}
\lambda_x = \sum_{s \in S} \omega^{xs}
\end{equation*}
and $\omega$ is a fixed, primitive $n^{th}$ root of unity.
\end{thm}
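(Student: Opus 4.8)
The plan is to exhibit an explicit eigenbasis for the adjacency matrix $A$ of $X = \cay{\zn,S}$ and read off the eigenvalues. First I would record the structural fact behind the "circulant'' description: if we index the rows and columns of $A$ by the elements of $\zn$, then $A_{ij}$ is the multiplicity of $j-i$ in $S$ (so $0$ or $1$ when $S$ is a genuine set). Equivalently, $A = \sum_{s \in S} P^{s}$, where $P$ is the permutation matrix of the shift $i \mapsto i+1$ on $\zn$ and the sum is taken with multiplicity. This realizes $A$ as a single matrix built out of powers of $P$, which is the whole point.

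Next I would diagonalize $P$. For each $x \in \zn$ put $v_x = (1, \omega^{x}, \omega^{2x}, \dots, \omega^{(n-1)x})^{T}$. A one-line computation gives $P v_x = \omega^{x} v_x$, hence $P^{s} v_x = \omega^{sx} v_x$ for every integer $s$ (well defined since $\omega^{n} = 1$). Summing over $s \in S$ then yields
\[
A v_x = \sum_{s \in S} P^{s} v_x = \Bigl( \sum_{s \in S} \omega^{xs} \Bigr) v_x = \lambda_x v_x ,
\]
so each $v_x$ is an eigenvector of $A$ with eigenvalue $\lambda_x$ as defined in the statement.

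Finally I would check that these exhaust the spectrum. The vectors $\{v_x : x \in \zn\}$ are the columns of the $n \times n$ Vandermonde matrix associated to the distinct numbers $1, \omega, \dots, \omega^{n-1}$, so they are linearly independent and form a basis of $\mathbb{C}^{n}$. Therefore $A$ is diagonalizable and its eigenvalue list, with multiplicities, is exactly $\{\lambda_x : x \in \zn\}$, which is the claimed formula. The multigraph case needs no separate argument: nothing above used that $S$ was a set, since both $A = \sum_{s\in S} P^{s}$ and the formula for $\lambda_x$ make sense verbatim when the sums run over a multiset.

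The argument is entirely routine, so I do not expect a genuine obstacle; the only points that need a little care are fixing the shift convention consistently (so that the exponent comes out as $xs$ and not $-xs$, which only reindexes the same eigenvalue list) and being explicit that the passage to multisets changes nothing.
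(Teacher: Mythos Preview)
Your proof is correct and essentially the same as the paper's: both exhibit the characters $x \mapsto \omega^{ix}$ (your vectors $v_x$, the paper's functions $g(x)=\omega^{ix}$) as eigenvectors of the adjacency operator, yielding the eigenvalues $\lambda_x=\sum_{s\in S}\omega^{xs}$. If anything, your write-up is slightly more complete, since you explicitly verify via the Vandermonde argument that the $v_x$ form a basis and hence exhaust the spectrum, a step the paper leaves implicit.
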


\begin{proof}
Let $T$ be the linear operator corresponding to the adjacency matrix of a circulant graph $X=\cay{\zn, \{a_1, a_2, \cdots, a_m\}}$.  If  $f$ is any complex function on the vertices of $X$ we have
\begin{equation*}
T(f)(x) = f(x+a_1) + f(x+a_2) + \cdots +f(x+a_m).
\end{equation*}
Let $\omega$ be a primitive $n^{th}$ root of unity and let $g(x) = \omega^{i\,x}$ for some $i\in \zn$.  Then,
\begin{eqnarray*}
T(g)(x) &=& \omega^{ix+ia_1} + \omega^{ix+ia_2} + \cdots + \omega^{ix+ia_m}\\
&=&\omega^{ix}\left( \omega^{ia_1} + \omega^{ia_2} + \cdots + \omega^{ia_m} \right).
\end{eqnarray*}
Thus, $g$ is an eigenfunction and $\omega^{ia_1} + \omega^{ia_2} + \cdots + \omega^{ia_m}$ is an eigenvalue.
\end{proof}

\subsection{Terms and Results for a Related Group Ring}\label{sec:groupring}

Consider the group $G = \langle z \mid z^n = 1 \rangle$, and let $\omega$ be a (fixed) primitive $n^{th}$ root of unity.  Let $\varphi : \mathbb{Z}G \rightarrow  \mathbb{Z}[ \omega ]$, be defined by $\varphi (z) = \omega$.  An element of $\mathbb{Z}G$ can be uniquely written as $\alpha = \sum_{i = 0}^{n-1}C_i z^i$.  We will call this representation \emph{normal form}.  We will discuss coefficients $C_j$ for values of $j$ that may be greater than $n$.  In these cases, $C_j$ refers to $C_i$ where $i \equiv j \mod n$ and $i \in \{0,1, \ldots, n-1\}$.  Let $\varepsilon (\alpha) = \sum_{i=0}^{n-1}C_i$.  The number of nonzero coefficients is denoted by $\varepsilon_0 (\alpha)$.  Let $\mathcal{S}(\alpha)$, the \emph{support} of $\alpha$, denote the multi-set of elements of $G$ where the multiplicity of $z^i \in \mathcal{S}(\alpha)$ is $C_i$.

For any finite subset $H \subseteq G$, let $\sigma (H) = \sum_{h \in H} h$.  Two basic properties of $\sigma (H)$ are that $\varepsilon (\sigma (H)) = \varepsilon_0 ( \sigma (H)) = |H |$ (the cardinality of H), and that, if $H$ is a subgroup, $\sigma (H)  h = \, \sigma(H)$ for any $h \in H$.  If $H$ is not the trivial group and  $h$ is not an identity element, this property still holds.  So, we must have $\sigma (H) \in \ker (\varphi)$, since $\varphi(h) \neq 1$  and $\mathbb{Z}[ \omega ]$ is an integral domain.\\

\begin{lem}\label{lem:const}

If $H$ is a subgroup of $G$, then the ideal $\mathbb{Z}G \sigma (H)$ consists of all $\sum c_g g$ such that $c_g$ is constant on the cosets of $H$.\\
\end{lem}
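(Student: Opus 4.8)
The plan is to prove the two inclusions separately, the key observation being that for any $g \in G$ the product $g\,\sigma(H) = \sum_{h \in H} gh$ is exactly the sum $\sigma(gH)$ of the elements of the coset $gH$, and in particular depends only on the coset $gH$ and not on the chosen representative $g$; this is immediate from the identity $\sigma(H)h = \sigma(H)$ for $h \in H$ noted above. Written in normal form, $g\,\sigma(H) = \sum c_{g'} g'$ with $c_{g'} = 1$ for $g' \in gH$ and $c_{g'} = 0$ otherwise, so each such element is the ``indicator'' of a single coset and is trivially constant on the cosets of $H$.

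For the inclusion $\mathbb{Z}G\,\sigma(H) \subseteq \{\sum c_g g : c_g \text{ constant on cosets of } H\}$, note that since $G$ is abelian every element of the ideal has the form $\alpha\,\sigma(H)$ with $\alpha = \sum_g a_g g \in \mathbb{Z}G$. Then $\alpha\,\sigma(H) = \sum_g a_g\,\sigma(gH)$, and collecting terms according to which coset each $g$ lies in, the coefficient of a fixed element $g_0$ in normal form is $\sum_{g \in g_0 H} a_g$. This value is the same for every $g_0$ lying in a given coset, so $\alpha\,\sigma(H)$ has coefficients constant on the cosets of $H$.

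For the reverse inclusion, suppose $\beta = \sum_g c_g g$ with $c_g$ constant on each coset of $H$. Let $g_1 H, \dots, g_k H$ be the distinct cosets of $H$ in $G$, where $k = [G:H]$, and let $d_i$ be the common value of $c_g$ on $g_i H$. Then
\begin{equation*}
\beta = \sum_{i=1}^{k} d_i\,\sigma(g_i H) = \sum_{i=1}^{k} d_i\, g_i\,\sigma(H) = \Bigl(\sum_{i=1}^{k} d_i g_i\Bigr)\sigma(H) \in \mathbb{Z}G\,\sigma(H).
\end{equation*}
Combining the two inclusions yields the claim.

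I do not expect a genuine obstacle here; the only point requiring care is the bookkeeping in the first inclusion, namely regrouping the double sum over group elements into a single sum over cosets, together with keeping straight that because $\mathbb{Z}G$ is commutative the notions of left and two-sided ideal coincide, so that every element of $\mathbb{Z}G\,\sigma(H)$ really is of the form $\alpha\,\sigma(H)$.
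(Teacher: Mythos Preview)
Your proof is correct and follows essentially the same approach as the paper: both establish the forward inclusion by computing that the coefficient of a fixed group element in $\alpha\,\sigma(H)$ is the sum of the coefficients of $\alpha$ over a coset of $H$, hence constant on cosets. You additionally spell out the converse explicitly via a coset decomposition, whereas the paper simply declares it ``an easy exercise.''
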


\begin{proof}

Let $\alpha = \sum_{g \in G} b_g g$, and $\alpha \sigma (H)= \sum_{g \in G} c_g \, g$.  Then, for each $x \in G$ we have
 \begin{equation}\nonumber
 c_x = \sum_{g \in G, h \in H, g  h = x} b_g  = \sum_{h \in H} b_{xh^{-1}}.
 \end{equation}
 Letting $\pi \in H$,
 \begin{eqnarray*}
 c_{x \pi} &=& \sum_{h \in H} b_{x (\pi h^{-1})}\\
 &=& \sum_{h \in H} b_{x h}\quad .
\end{eqnarray*}
This shows that if $\sum_{g\in G}c_g g \in \mathbb{Z} G\sigma(H)$, then the $c_g$ are
constant on the cosets of $H$.  The converse is an easy exercise.
\end{proof}

Let $n = p_1^{r_1} p_2^{r_2} \cdots p_s^{r_s}$ where $p_1 < p_2 < \cdots <p_s$ are primes.
Let $P_i$ be the unique subgroup of $G$ with order $p_i$.  Theorem 3.3 of Lam and Leung's paper, \protect \cite{Lam}, reads as follows:\\
\begin{quote} \textit{
(1) If $s = 1$, $\mathbb{N}G \cap \ker (\varphi) = \mathbb{N}  \sigma (P_1)$.  (2)  If $s = 2$, $\mathbb{N}G \cap \ker (\varphi) = \mathbb{N}P_1  \sigma (P_2) + \mathbb{N}P_2  \, \sigma(P_1)$.}
\\
\end{quote}
However, the following example proves this theorem is misstated.
\begin{exmp}\label{ex:wronglam}
Let $n = 12$ and $\omega$ be a primitive $12^{th}$ root of unity.  Thus, $P_1 = \{1, z^6 \}$ and $P_2 = \{1, z^4, z^8 \}$.  The sum $z + z^5 + z^{9}$ is not an element of $\mathbb{N}P_1  \sigma (P_2) + \mathbb{N}P_2  \, \sigma(P_1)$, but it is an element of $\mathbb{N}G \cap \ker (\varphi)$ since $\omega(1 + \omega^{4} + \omega^8 ) = 0$.
\end{exmp}
The following restatement is proved using the proof supplied by Lam and Leung for Theorem 3.3  of \protect \cite{Lam}.
\begin{lem}\label{lem:corlam}
(1) If $s = 1$, $\mathbb{N}G \cap \ker (\varphi) = \mathbb{N}G   \sigma (P_1)$.  (2)  If $s = 2$, $\mathbb{N}G \cap \ker (\varphi) = \mathbb{N}G  \sigma (P_2) + \mathbb{N}G  \, \sigma(P_1)$.
\end{lem}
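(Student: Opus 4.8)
The plan is to prove the two asserted equalities by proving each inclusion separately. The inclusion $\supseteq$ is immediate in both cases: it was already noted that $\sigma(P_i)\in\ker(\varphi)$, so for any $\beta\in\mathbb{N}G$ the product $\beta\,\sigma(P_i)$ still has nonnegative coefficients and lies in $\ker(\varphi)$; since $\mathbb{N}G\cap\ker(\varphi)$ is closed under addition, $\mathbb{N}G\,\sigma(P_i)\subseteq\mathbb{N}G\cap\ker(\varphi)$, which gives the right-to-left containment in (1) and (2). The reverse inclusion is the substance. I would first settle the prime-power case $s=1$ by describing explicitly the lattice of integer relations among the $n$-th roots of unity, and then bootstrap to $s=2$ through the Chinese Remainder decomposition $\mathbb{Z}_n\cong\mathbb{Z}_{p_1^{r_1}}\times\mathbb{Z}_{p_2^{r_2}}$; this essentially recovers (the corrected form of) Lam and Leung's Theorem~3.3.

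For $s=1$, say $n=p_1^{r_1}$, consider the relation lattice $L=\{(C_0,\dots,C_{n-1})\in\mathbb{Z}^n:\sum_i C_i\omega^i=0\}$. It contains the $p_1^{r_1-1}$ ``block vectors'' attached to the translates $z^j\,\sigma(P_1)$ for $0\le j<p_1^{r_1-1}$, each supported on one coset of $P_1$. These are visibly $\mathbb{Z}$-linearly independent, and the quotient of $\mathbb{Z}^n$ by the subgroup they generate is a direct sum of copies of $\mathbb{Z}^{p_1}/\mathbb{Z}(1,\dots,1)$, hence free of rank $(p_1-1)p_1^{r_1-1}=\phi(n)$. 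The map $\varphi$ therefore induces a surjection from this free module onto the free module $\mathbb{Z}[\omega]$ of the same finite rank, which must be an isomorphism, so the block vectors already generate all of $L$. Consequently $\varphi(\alpha)=0$ forces the coefficients of $\alpha$ to be constant on the cosets of $P_1$, i.e.\ (by Lemma~\ref{lem:const}) $\alpha\in\mathbb{Z}G\,\sigma(P_1)$; and if those coefficients are nonnegative, the witnessing element of $\mathbb{Z}G$ already lies in $\mathbb{N}G$, giving $\alpha\in\mathbb{N}G\,\sigma(P_1)$.

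For $s=2$, write $n=p^aq^b$ with $p=p_1<q=p_2$ and fix primitive $p^a$-th and $q^b$-th roots $\omega_p,\omega_q$ with $\omega_p\omega_q=\omega$. Using $\mathbb{Z}_n\cong\mathbb{Z}_{p^a}\times\mathbb{Z}_{q^b}$ I would factor $\varphi$ as the composite $\mathbb{Z}G\cong\mathbb{Z}[\mathbb{Z}_{p^a}]\otimes_{\mathbb{Z}}\mathbb{Z}[\mathbb{Z}_{q^b}]\xrightarrow{\varphi_p\otimes\varphi_q}\mathbb{Z}[\omega_p]\otimes_{\mathbb{Z}}\mathbb{Z}[\omega_q]\xrightarrow{\mathrm{mult}}\mathbb{Z}[\omega]$, where $\varphi_p,\varphi_q$ are the analogues of $\varphi$ for the two factors. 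The multiplication map is an isomorphism onto $\mathbb{Z}[\omega]=\mathbb{Z}[\mu_n]$, because $[\mathbb{Q}(\mu_n):\mathbb{Q}(\mu_{p^a})]=\phi(q^b)$ turns a $\mathbb{Z}$-basis of $\mathbb{Z}[\omega_q]$ into a $\mathbb{Z}[\omega_p]$-basis of the compositum; under this identification $\sigma(P_1)$ and $\sigma(P_2)$ correspond to $\sigma_p\otimes1$ and $1\otimes\sigma_q$, where $\sigma_p$ (resp.\ $\sigma_q$) is the sum of the order-$p$ subgroup of $\mathbb{Z}_{p^a}$ (resp.\ of the order-$q$ subgroup of $\mathbb{Z}_{q^b}$). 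As all modules here are free over $\mathbb{Z}$, the standard formula for the kernel of a tensor product of two surjections gives $\ker(\varphi_p\otimes\varphi_q)=\ker(\varphi_p)\otimes\mathbb{Z}[\mathbb{Z}_{q^b}]+\mathbb{Z}[\mathbb{Z}_{p^a}]\otimes\ker(\varphi_q)$, and the $s=1$ case applied to $p^a$ and to $q^b$ evaluates these as $\mathbb{Z}[\mathbb{Z}_{p^a}]\,\sigma_p$ and $\mathbb{Z}[\mathbb{Z}_{q^b}]\,\sigma_q$. Translating back, $\ker(\varphi)=\mathbb{Z}G\,\sigma(P_1)+\mathbb{Z}G\,\sigma(P_2)$ as $\mathbb{Z}$-modules, the integer analogue of (2).

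It remains to upgrade a given $\alpha\in\mathbb{N}G\cap\ker(\varphi)$ to a \emph{nonnegative} combination, and this is the step I expect to be the main obstacle: it is the only place where positivity rather than linear algebra over $\mathbb{Z}$ is used, and exactly where allowing coefficients from $\mathbb{N}G$ instead of $\mathbb{N}P_i$ (as in the original misstatement) is essential. Writing $\alpha=\sum_{g\in\mathbb{Z}_{p^a}}g\otimes\gamma_g$ with $\gamma_g=\sum_h c_{g,h}\,h$ and all $c_{g,h}\ge0$, one expands $\varphi(\alpha)=0$ against a $\mathbb{Z}$-basis of $\mathbb{Z}[\omega_q]$ and applies the $s=1$ lattice result to each resulting vanishing $\mathbb{Z}$-combination of the $\varphi_p(g)$; this shows $\gamma_g-\gamma_{g'}\in\ker(\varphi_q)=\mathbb{Z}[\mathbb{Z}_{q^b}]\,\sigma_q$ whenever $g,g'$ lie in a common coset $C$ of the order-$p$ subgroup of $\mathbb{Z}_{p^a}$, so that $c_{g,h}-c_{g',h}$ depends only on the coset $D$ of $h$ modulo the order-$q$ subgroup of $\mathbb{Z}_{q^b}$. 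After fixing a representative $r(C)$ for each such coset $C$ and an integer shift $s_C^{(D)}$ for each pair $(C,D)$, one writes $\alpha=\beta_1\,\sigma(P_1)+\beta_2\,\sigma(P_2)$ with $\beta_1=\sum_C r(C)\otimes\bigl(\gamma_{r(C)}+\sum_D s_C^{(D)}\sigma(D)\bigr)$ and $\beta_2$ assembled from the residual terms; requiring $\beta_1,\beta_2\in\mathbb{N}G$ reduces to the inequalities $-\min_{h\in D}c_{r(C),h}\le s_C^{(D)}\le\min_{g\in C}\bigl(c_{g,h}-c_{r(C),h}\bigr)$, and an admissible integer $s_C^{(D)}$ exists because the two bounds sum to $\min_{g\in C}c_{g,h_0}\ge0$ for any $h_0\in D$ minimizing $c_{r(C),h}$. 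This produces the required $\alpha\in\mathbb{N}G\,\sigma(P_1)+\mathbb{N}G\,\sigma(P_2)$ and completes (2).
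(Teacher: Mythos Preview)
Your argument is correct. Note first that the paper does not supply its own proof of this lemma: it simply asserts that Lam and Leung's original proof of their Theorem~3.3 establishes the corrected statement, so there is nothing in the paper to compare against beyond that citation. Your proposal, by contrast, is a complete self-contained argument, and it proceeds along somewhat different lines from the Lam--Leung treatment.

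For $s=1$ your rank-counting identification of $\ker(\varphi)$ with $\mathbb{Z}G\,\sigma(P_1)$ is the cleanest route: the block vectors span a saturated sublattice of corank $\phi(n)$, and the induced surjection onto the free $\mathbb{Z}$-module $\mathbb{Z}[\omega]$ of the same rank is then forced to be an isomorphism. The nonnegative upgrade is immediate since membership in $\mathbb{Z}G\,\sigma(P_1)$ just means constancy on cosets. For $s=2$ you factor $\varphi$ through the Chinese Remainder tensor decomposition, identify $\ker(\varphi)=\mathbb{Z}G\,\sigma(P_1)+\mathbb{Z}G\,\sigma(P_2)$ from the kernel of $\varphi_p\otimes\varphi_q$, and then isolate the positivity upgrade as a separate combinatorial step. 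That last step is the heart of the matter and is handled correctly: the observation that $c_{g,h}-c_{r(C),h}$ depends only on the $q$-coset $D$ of $h$ (for $g$ in the $p$-coset $C$) is exactly what makes the two-sided constraint on $s_C^{(D)}$ consistent, and your verification that the upper bound exceeds the lower bound by $\min_{g\in C}c_{g,h_0}\ge 0$ guarantees an admissible integer. (One wording quibble: you say ``the two bounds sum to''; what you actually compute, and what you need, is their \emph{difference}.)

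What your route buys is a clean separation between the $\mathbb{Z}$-linear algebra and the positivity, and it makes transparent why the coefficient set must be enlarged from $\mathbb{N}P_i$ to $\mathbb{N}G$: the shifts $s_C^{(D)}$ may well be nonzero for cosets $D$ not represented in $P_i$. The paper's approach, deferring to \cite{Lam}, has only the virtue of brevity.
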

Thus, we can see that if $s <3$, $\mathbb{N}G \cap \ker (\varphi ) = \sum_i \mathbb{N}G  \sigma (P_i)$.  Corollary 4.9 of the same paper, \protect \cite{Lam}, gives information for when $s\geq 3$.  Corollary 4.9 reads as follows:\\
\begin{quote} \textit{
Any element $u \in \mathbb{N}G \cap \ker (\varphi )$ with $\varepsilon_0 (u) < p_1(p_2-1)+p_3-p_2$ lies in $\sum_i \mathbb{N}G  \sigma (P_i)$}.\\
\end{quote}
This corollary will have an important role in proving the following lemma.\\

\begin{lem}\label{lem:notsamesym}
Let $\alpha$ and $\beta$ be elements of $\mathbb{N} G$ such that $\varepsilon ( \alpha)  = \varepsilon( \beta ) = m$ and $ \mathcal{S}(\alpha) \cap \mathcal{S} (\beta) = \emptyset $.  If $ m \leq p_1$ and either $s=1$ or $p_2 > p_1(m-1)$, then
\begin{equation*}
\varphi (\alpha) = \varphi(\beta) \Rightarrow  \alpha = g_\alpha \sigma (P_1) \mbox{ and } \beta = g_\beta \, \sigma(P_1) 
\end{equation*}
for any $g_\alpha \in \mathcal{S}(\alpha)$ and $g_\beta \in \mathcal{S}(\beta)$.
\end{lem}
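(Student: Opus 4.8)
The plan is to transfer the hypothesis $\varphi(\alpha)=\varphi(\beta)$ into a statement about $\mathbb{N}G\cap\ker(\varphi)$, to which Lemma~\ref{lem:corlam} and Corollary~4.9 of \cite{Lam} apply, and then to squeeze out the coset structure using the smallness of $m$ and the disjointness of the supports. First dispose of $m=1$: there $\varphi(\alpha)=\varphi(\beta)$ forces the two singleton supports to coincide, contradicting $\mathcal{S}(\alpha)\cap\mathcal{S}(\beta)=\emptyset$, so the implication is vacuously true and we may assume $m\geq 2$.

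The natural object is $\gamma:=\alpha-\beta$: it lies in $\ker(\varphi)$, has $\varepsilon(\gamma)=0$, and since the supports are disjoint its positive part is exactly $\alpha$ and its negative part exactly $\beta$, so $\varepsilon_0(\gamma)\leq 2m$. The obstruction is that $\gamma\notin\mathbb{N}G$. I would remove the signs by producing a group element $\theta$ with $\varphi(\theta)=-1$: if $n$ is even take $\theta=z^{n/2}$ and work in $\mathbb{Z}G$; if $n$ is odd, pass to the group ring of the cyclic group $G'$ of order $2n$ via $z\mapsto (z')^2$, set $\theta=(z')^n$, and extend $\varphi$ to $\varphi'$ with $\varphi'((z')^2)=\omega$. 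In either case $v:=\alpha+\theta\beta$ lies in $\mathbb{N}G'\cap\ker(\varphi')$ with $\varepsilon(v)=2m$ and $\varepsilon_0(v)\leq 2m$, and the splitting $v=\alpha+\theta\beta$ records precisely the hypothesis $\varphi(\alpha)=\varphi(\beta)$.

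Next I would show $v\in\sum_i\mathbb{N}G'\sigma(P_i')$, where the $P_i'$ are the prime-order subgroups of $G'$. If the order of $G'$ has at most two prime divisors this is immediate from Lemma~\ref{lem:corlam}; otherwise its three smallest prime divisors are $\{2,p_1,p_2\}$ (the $n$ odd case) or $\{p_1,p_2,p_3\}$ with $p_1=2$ (the $n$ even case), and I would use $m\leq p_1$ and $p_2>p_1(m-1)$ to verify $2m<q_1(q_2-1)+q_3-q_2$ for those three primes $q_1<q_2<q_3$, so that Corollary~4.9 of \cite{Lam} applies to $v$. This inequality needs a little care when $m=2$, where the bound $p_1\geq m$ alone is not enough and one invokes that $n$ odd forces $p_1\geq 3$. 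Consequently $v$ is a sum, with non‑negative integer multiplicities, of translates $g\,\sigma(P_i')$, each of augmentation $|P_i'|$.

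Since these augmentations add up to $2m\leq 2p_1$, the inequality $p_2>p_1(m-1)$ together with $p_1\geq m$ eliminates every summand of augmentation $\geq p_2$, so each summand is a translate of $\sigma(P_1')$ (the order‑$p_1$ subgroup, corresponding to $P_1\subseteq G$) or, in the $n$ odd case, of the auxiliary order‑$2$ subgroup $\langle\theta\rangle$. In the odd case the terms of $\alpha$ carry even $z'$‑exponents and those of $\theta\beta$ odd ones, so an order‑$2$ summand $g\,\sigma(\langle\theta\rangle)$, with support $\{g,\theta g\}$, pairs one element of $\mathcal{S}(\alpha)$ with one of $\mathcal{S}(\beta)$; if every summand were of this kind, reading off $\alpha$ and $\theta\beta$ would give $\beta=\alpha$, contradicting disjointness. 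Hence a translate of $\sigma(P_1')$ occurs; as it already has augmentation $p_1\geq m=\varepsilon(\alpha)$, it must constitute all of $\alpha$, forcing $m=p_1$ and $\mathcal{S}(\alpha)$ to be a single coset of $P_1$, i.e. $\alpha=g_\alpha\,\sigma(P_1)$ for some, hence (by translation invariance of $\sigma(P_1)$) for every, $g_\alpha\in\mathcal{S}(\alpha)$; the remaining summand similarly constitutes $\beta$. In the $n$ even case $m=2$ and there are exactly two augmentation‑$2$ summands, both translates of $\sigma(P_1)$, and one argues directly: if the two underlying cosets coincide, disjointness is violated, and if they are distinct, disjointness forces $\mathcal{S}(\alpha)$ to be one of them and $\mathcal{S}(\beta)$ the other.

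I expect the last step to be the main obstacle: the decomposition of $v$ into translates of the $\sigma(P_i')$ is highly non‑unique, so one must use $\mathcal{S}(\alpha)\cap\mathcal{S}(\beta)=\emptyset$ systematically to discard every recombination that fails to give the coset structure and to locate the $\sigma(P_1)$‑summands inside $\alpha$ and $\beta$; the bookkeeping in the $n$ odd case (tracking parity of exponents) and in the small $n$ even case (enumerating matchings of a four‑element set) is where the real work lies. The verification that the hypotheses $m\leq p_1$ and $p_2>p_1(m-1)$ exactly meet the numerical threshold in Corollary~4.9 of \cite{Lam} is the other point that must be handled with some care.
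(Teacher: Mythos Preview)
Your approach is sound and reaches the conclusion, but it takes a genuinely different route from the paper's. The paper never leaves $G$: it forms the auxiliary element $\gamma=\beta+\alpha\,\sigma(P_1\setminus\{1\})\in\mathbb{N}G\cap\ker\varphi$, checks that $\varepsilon_0(\gamma)\le mp_1\le p_1^2$ meets the Lam--Leung threshold uniformly (no parity split on $n$, no double cover), reduces to $\gamma\in\mathbb{N}G\,\sigma(P_1)$ by the same kind of augmentation count you use, and then runs a somewhat delicate induction---tracking, one index at a time, that each $z^{a_i}$ lies in the $P_1$-coset of $z^{a_1}$---to force $\mathcal{S}(\alpha)$ to be a full coset. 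Your construction instead encodes $\alpha-\beta$ directly as $v=\alpha+\theta\beta$ and exploits the parity grading of $G'$ to separate the two pieces inside the Lam--Leung decomposition: since $P_1'$ sits in the even-exponent subgroup, every $\sigma(P_1')$-translate lands wholly on the $\alpha$ side or wholly on the $\theta\beta$ side, and the bound $\varepsilon(\alpha)=m\le p_1$ then pins $\alpha$ down to a single coset in one stroke, with no induction. The price you pay for this cleaner endgame is the even/odd case split, the passage to the order-$2n$ group when $n$ is odd, and (as you rightly flag) the bookkeeping needed to locate the $\sigma(P_1')$-summand on the correct side and to handle the degenerate $p_1=2$, $m=2$ case by hand; the paper's single auxiliary $\gamma$ avoids all of that at the cost of its longer inductive finish.
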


\begin{proof}
Let $\alpha =  z^{a_1}+ z^{a_2}+ \cdots +z^{a_m} $ and $ \beta =   z^{b_1}+z^{b_2}+ \cdots + z^{b_m} $.  Since  $ \mathcal{S}(\alpha) \cap \mathcal{S} (\beta) = \emptyset $, it must be the case that $z^{a_i} \neq z^{b_j}$ for any $i, j$ pair.  Therefore, for $\varphi(\alpha) = \varphi(\beta)$, $m$ must be greater than one.  Using the fact that $z^{a_i} \sigma (P_1) \in \ker (\varphi ) \cap \mathbb{N} G$ for all $i$, we can deduce the following:
\begin{eqnarray*}
0&=&\varphi ( \alpha \, \sigma(P_1)) \\
 &=& \varphi ( \alpha) + \varphi \left( \alpha \, \sigma(P_1 \backslash \{1\})\right) \\
 &=& \varphi ( \beta) + \varphi \left( \alpha \, \sigma(P_1 \backslash \{1\})\right) \\
 &=& \varphi \left( \beta +  \alpha \, \sigma(P_1 \backslash \{1\}) \right) .
\end{eqnarray*}
Let $\gamma = \beta + \alpha \, \sigma(P_1 \backslash \{1\}) \in \ker (\varphi ) \cap \mathbb{N} G$.  Now, we will show that $\gamma \in \sum_i \mathbb{N}G  \, \sigma(P_i)$.  Recall that if $p_3$ does not exist, $\gamma \in \sum_i \mathbb{N}G \, \sigma(P_i)$.  Assuming that $p_3$ does exist, we must also assume that $p_2 > p_1 (m-1)$.  Therefore, 
\begin{eqnarray*}
\varepsilon_0 (\gamma) &\leq& \varepsilon (\gamma)\\  
& = & \varepsilon (\beta) + \varepsilon \left( \alpha  \, \sigma(P_1 \backslash \{1\}) \right) \\
& = & m + m(p_1 -1)\\
& = & p_1m\\
& \leq & (p_1)^2\\
& \leq & (p_1)^2(m-1)\\
& <& (p_1)^2(m-1)+2\\
& = & p_1(p_1(m-1)) + (p_2 +2) -p_2\\
& \leq & p_1(p_2 -1)+p_3 -p_2.
\end{eqnarray*}
By Corollary 4.9 of Lam and Leung's paper, $\gamma \in \sum_i \mathbb{N}G  \, \sigma(P_i)$.  Thus, whether or not $p_3$ exists, $\gamma \in \sum_i \mathbb{N}G  \, \sigma(P_i)$, and we can write $\gamma = \sum_{i=1}^s \sum_{g \in G} x_{i,g}\,g\, \sigma (P_i)$. 
Supposing $x_{2,h} \geq 1$ for some $h \in G$, we can express $\varepsilon(\gamma)$ in two different ways: 
\begin{equation*}
x_{2, h}p_2 + n_1p_1 + n_2p_2 + \cdots + n_sp_s =\varepsilon (\gamma) =mp_1
\end{equation*}
for some $n_i \in \mathbb{N}$.  We can again use the hypotheses that $p_1 \geq m$ and $p_2> p_1(m-1)$ and thus deduce:
\begin{eqnarray*}
n_1p_1 +n_2p_2 + \cdots + n_sp_s & = &mp_1 - x_{2,h}p_2\\
&\leq& mp_1 - p_2\\
&<& p_1.
\end{eqnarray*}
Since $p_1$ is the smallest of the primes that divide $n$, $n_i = 0$ for all $1 \leq i \leq s $. This tells us that $mp_1 = x_{2,h}\, p_2$.  This would imply that $p_2$ divides $m$, but this is a contradiction because $p_2$ is greater than $m$.  Therefore, we can conclude that $x_{2,g} = 0$ for all $g \in G$.  Similarly, we can conclude that $x_{i,g} = 0$ for all $i \geq 2$, and thus, $\gamma \in \mathbb{N}G  \, \sigma(P_1)$. 

For the remainder of this proof, let $ \gamma = \sum_{i=0}^{p_1-1} x_i z^i$ be the normal form representation of $\gamma$. And let $\mathfrak{S}(i)$ represent the following four statements:
\begin{equation*}\begin{array}{rllc}
&1)& x_{a_1} \geq i \\
&2)& i<m\\
&3) & z^{a_{1+i}} = z^{a_1 + \frac{l_in}{p_1}}& \mbox{ (for some }1\leq \ell_i < p_1) \\
&4) & a_1 \neq a_{1+i}.\\
\end{array} \end{equation*}
After arbitrarily choosing $a_1$, I will show by induction that we can recursively order the $a_i$ so that $\mathfrak{S}(i)$ is true for all $i \in \{ 1, 2, \ldots, p_1-1\}$.  

Statement (2) of $\mathfrak{S}(1)$ must be true because $m>1$.  Since $z^{a_1} \sigma (P_1 \backslash \{1\}) = z^{a_1 + \frac{n}{p_1}} + z^{a_1 + \frac{2n}{p_1}} + \cdots + z^{a_1+\frac{(p_1-1)n}{p_1}}$, we can see that $x_{a_1 + \frac{n}{p_1}} \geq 1$.   We can then use Lemma \ref{lem:const} to conclude that $ x_{a_1} \geq 1$.  Therefore, statement (1) of $\mathfrak{S}$(1) is true, and $z^{a_1} \in \mathcal{S}(\gamma)$.  Since $\mathcal{S}(\alpha) \cap \mathcal{S}(\beta) = \emptyset$, we can conclude that $z^{a_1} \notin \mathcal{S}(\beta)$, and thus we know that $z^{a_1} \in \mathcal{S}( \alpha  \, \sigma(P_1 \backslash \{ 1\}))$. For this to be true, it must be the case that $z^{a_1} \in z^{a_i}(P_1 \backslash \{1\})$ for some $i$.  We know that $i \neq 1$ because $z^{a_1}$ cannot be an element of $z^{a_1}(P_1 \backslash \{1 \})$.  Without loss of generality, we can say $z^{a_1} \in z^{a_2}(P_1 \backslash \{1 \})$.  Notice that this causes statement (4) of $\mathfrak{S}(1)$ to be satisfied.  We can also conclude that $z^{a_1} = z^{a_2 + \ell n/p_1}$ for some $\ell \neq 0$.  This then allows us to rewrite $z^{a_2}$ as $z^{a_2} = z^{a_1+(p_1 - \ell)n/p_1}$.  Letting $\ell_1 = (p_1 - \ell)$, we can see that statement (3) of $\mathfrak{S}(1)$ is also true.  Hence, $\mathfrak{S}(1)$ is true.

Now I assume that $\mathfrak{S}(i)$ is true for all $i \leq j$ for some $j < p_1 -1$ in order to show that $\mathfrak{S}(j+1)$ is also true.  In order to see that statement (1) of $\mathfrak{S}(j+1)$ is true, I will rewrite $\gamma$.  For the following equations, assume that a sum from $a$ to $b$ is zero if $b < a$.
\begin{eqnarray*}
\gamma& =& \beta + \alpha \, \sigma (P_1 \backslash \{ 1 \})\\
&=& \beta+\sum_{i=0}^{j}z^{a_{1+i}} \sigma (P_1 \backslash \{1 \})+ \sum_{i=j+2}^m z^{a_i} \sigma (P_1 \backslash \{1 \})\\
&=& \beta+ \sum_{i=0}^j z^{a_1 +\frac{ \ell_in}{p_1}}  \sigma (P_1 \backslash \{1 \}) +  \sum_{i=j+2}^m z^{a_i} \sigma (P_1 \backslash \{1 \}) \, \,  \mbox{( letting } \ell_0 = 0)\\
& =& \beta + \sum_{i=0}^j \left( z^{a_1}  \, \sigma( P_1) -z^{a_1+\frac{ \ell_in}{p_1}} \right) +\sum_{i=j+2}^m z^{a_i} \sigma (P_1 \backslash \{1 \}) \\
& = & \beta + (j+1)z^{a_1} \sigma (P_1) - \sum_{i=0}^j z^{a_1 + \frac{ \ell_i n}{p_1}} + \sum_{i=j+2}^m z^{a_i} \sigma (P_1 \backslash \{1 \})\\
& = & \beta + (j+1)\sum_{k  = 0}^{p_1-1}z^{a_1 + \frac{k  n}{p_1}}  - \sum_{i=0}^j z^{a_1 + \frac{ \ell_i n}{p_1}} + \sum_{i=j+2}^m z^{a_i} \sigma (P_1 \backslash \{1 \})
\end{eqnarray*}
This makes it easier to see that for every $0 \leq k  < p_1 -1$:
\begin{equation*}
x_{a_1+\frac{ k  n}{p_1}} \geq \begin{cases}
j &\mbox{if }k  = \ell_i \mbox{ for some } 0 \leq i \leq j\\
j+1 &\mbox{otherwise.}
\end{cases}
\end{equation*}
Since $j < p_1 -1$, there must be some $k $ such that $ x_{a_1+\frac{ k  n}{p_1}} \geq j+1$.  Due to Lemma \ref{lem:const}, we can see that $x_{a_1} \geq j+1$ as well.  Hence, statement (1) of $\mathfrak{S}(j+1)$ is true.

Due to statement (3) and (4) of $\mathfrak{S}(i)$, we know that the multiplicity of $z^{a_1}$ in $\mathcal{S} \left( \sum_{i=0}^j  (z^{a_1}  \, \sigma( P_1) -z^{a_1+\ell_i n /p_1}) \right)$ is exactly $j$.  Thus, for $x_{a_1} \geq j+1$, it must be the case that $z^{a_1} \in \mathcal{S}(\beta)$ or $z^{a_1} \in \mathcal{S} \sum_{i=j+2}^m z^{a_i} \sigma (P_1 \backslash \{1 \}) )$.  Since $\mathcal{S}(\alpha) \cap \mathcal{S}(\beta) = \emptyset$, $z^{a_1}$ must be an element of the latter support.  This implies that the sum must not be zero.  Thus, $m \geq j+2$ which causes statement (2) of $\mathfrak{S}(j+1)$ to be satisfied.  Without loss of generality, we can say $z^{a_1} \in z^{a_{j +2}} (P_1 \backslash \{1\} )$.  We can then conclude that statements (3) and (4) for $\mathfrak{S}(j+1)$ are true, and therefore $\mathfrak{S}(i)$ is true for all $i < p_1 $.   We can use statement (2) and the hypothesis that $m \leq p_1$ to conclude that $m=p_1$.

A similar process can be used to prove any of the four statements for any $a_j$, not just for $a_1$.  It is most important to note that statement (4) is true for all pairs of elements in the support of $\alpha$.  With this in mind, we can conclude the following:
\begin{equation*}
i \neq j \Rightarrow a_{1+i} \neq a_{1+j} \Rightarrow a_1 + \frac{\ell_i n}{p_1} \neq a_1 + \frac{\ell_j n}{p_1} \Rightarrow \ell_i \neq \ell_j.
\end{equation*}
Now we can rewrite $\alpha$ in terms of $z^{a_1}$:
\begin{eqnarray*}
\alpha &=&  z^{a_1}+ z^{a_2}+ \cdots +z^{a_m} \\
 & =&  z^{a_1} + z^{a_1+\frac{\ell_1n}{p_1}} + \cdots + z^{a_1 + \frac{\ell_{(m-1)}n}{p_1}} \\
 & = & z^{a_1} \sigma (P_1) \quad \mbox{ (because all }\ell_i\mbox{ are unique and $m=p_1$.)}
 \end{eqnarray*}
 Since $a_1$ was chosen arbitrarily and there is no way to distinguish between $\alpha$ and $\beta$, we can say $ \alpha = z^{a_i} \, \sigma(P_1)$ and $\beta = z^{b_i}\, \sigma(P_1)$ for any $1\leq i \leq m$. 
\end{proof}
 

\begin{cor}\label{cor:sameorsym} 
Let $G = \langle z \mid z^n =1 \rangle$ where  $n = p_1^{r_1} p_2^{r_2} \cdots p_s^{r_s}$ and $p_1 < p_2 < \cdots <p_s$ are primes. Let $\alpha$ and $\beta$ be elements of $\mathbb{N} G$ such that $\varepsilon ( \alpha)  = \varepsilon( \beta ) = m$.  Suppose
\begin{eqnarray*}
&(i)& p_1 \geq m;\\
&(ii)& \mbox{either }s=1\mbox{ or }p_2 > p_1(m-1)\\
&(iii)& \varphi (\alpha) = \varphi(\beta).
\end{eqnarray*}
Then, we either have [1] $\alpha = \beta$ or [2] $m=p_1$, $\alpha = g_\alpha \sigma (P_1)$  for any $g_\alpha \in \mathcal{S}(\alpha)$, and $\beta = g_\beta \sigma (P_1)$ for any $g_\beta \in \mathcal{S}(\beta)$.
\end{cor}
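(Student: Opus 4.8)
The plan is to reduce to Lemma~\ref{lem:notsamesym} by peeling off the ``common part'' of $\alpha$ and $\beta$. Writing $\alpha = \sum_{i=0}^{n-1} C_i^\alpha z^i$ and $\beta = \sum_{i=0}^{n-1} C_i^\beta z^i$ in normal form, I would set $\delta = \sum_{i=0}^{n-1} \min\{C_i^\alpha, C_i^\beta\}\, z^i$ and define $\alpha^\ast = \alpha - \delta$ and $\beta^\ast = \beta - \delta$. Then $\alpha^\ast, \beta^\ast \in \mathbb{N}G$, their supports are disjoint (for each $i$, at most one of $C_i^\alpha - C_i^\beta$ and $C_i^\beta - C_i^\alpha$ is positive, so $z^i$ has positive multiplicity in at most one of $\mathcal{S}(\alpha^\ast)$, $\mathcal{S}(\beta^\ast)$), and $\varepsilon(\alpha^\ast) = \varepsilon(\beta^\ast) =: m^\ast$ with $m^\ast = m - \varepsilon(\delta) \le m$. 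Since $\varphi$ is a ring homomorphism, subtracting $\varphi(\delta)$ from both sides of hypothesis~(iii) gives $\varphi(\alpha^\ast) = \varphi(\beta^\ast)$.

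Now I would split into two cases. If $m^\ast = 0$ then $\alpha^\ast = \beta^\ast = 0$, so $\alpha = \beta = \delta$ and conclusion~[1] holds. If $m^\ast \ge 1$, I would check that $\alpha^\ast$ and $\beta^\ast$ satisfy the hypotheses of Lemma~\ref{lem:notsamesym}: we have $m^\ast \le m \le p_1$ by~(i), and either $s = 1$ or $p_2 > p_1(m-1) \ge p_1(m^\ast - 1)$ by~(ii); the disjoint-support hypothesis was arranged above. Lemma~\ref{lem:notsamesym} then yields $\alpha^\ast = g\,\sigma(P_1)$ and $\beta^\ast = g'\,\sigma(P_1)$ for any $g \in \mathcal{S}(\alpha^\ast)$ and $g' \in \mathcal{S}(\beta^\ast)$.

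Finally I would observe that in this second case $\delta$ must vanish: since $\varepsilon(g\,\sigma(P_1)) = |P_1| = p_1$, we get $m^\ast = p_1$, and together with $m^\ast \le m \le p_1$ this forces $m = p_1$ and $\varepsilon(\delta) = m - m^\ast = 0$, so $\delta = 0$. Hence $\alpha = \alpha^\ast$, $\beta = \beta^\ast$, $\mathcal{S}(\alpha) = \mathcal{S}(\alpha^\ast)$ and $\mathcal{S}(\beta) = \mathcal{S}(\beta^\ast)$, which is exactly conclusion~[2]. The argument is essentially bookkeeping rather than anything deep; the only points that need a little care are verifying that passing from $\alpha,\beta$ to $\alpha^\ast,\beta^\ast$ cannot break hypotheses~(i) and~(ii) (it cannot, since both are monotone in the size parameter and $m^\ast \le m$), and noticing that the degenerate case $m^\ast = 0$ is precisely the one producing the alternative $\alpha = \beta$.
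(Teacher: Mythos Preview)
Your proof is correct and follows essentially the same approach as the paper: the paper likewise writes $\alpha = \tilde\alpha + \alpha'$ and $\beta = \tilde\beta + \beta'$ with $\alpha' = \beta'$ and $\mathcal{S}(\tilde\alpha)\cap\mathcal{S}(\tilde\beta)=\emptyset$ (your $\delta$ is their $\alpha'=\beta'$, your $\alpha^\ast,\beta^\ast$ are their $\tilde\alpha,\tilde\beta$), applies Lemma~\ref{lem:notsamesym} to the disjoint parts, and then uses $m\le p_1$ together with $\varepsilon(\tilde\alpha)=p_1$ to force the common part to vanish. Your version is slightly more explicit in constructing $\delta$ via coefficientwise minima and in checking that hypotheses~(i) and~(ii) survive the reduction, but the argument is the same.
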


\begin{proof}
In $\mathbb{N}G$, let $\alpha = \Tilde{\alpha} + \alpha'$ and $\beta = \Tilde{\beta} + \beta'$ such that $\alpha' = \beta'$ and $\mathcal{S}(\Tilde{\alpha}) \cap \mathcal{S}(\Tilde{\beta}) = \emptyset$.  If $\tilde{\alpha} = \tilde{\beta} = 0$, then $\alpha = \alpha' = \beta' = \beta$ and the proof is finished.  For the rest of this proof, assume that $\tilde{\alpha}\neq 0$.  Since $\varphi(\alpha) = \varphi(\beta)$ and $\varphi(\alpha') = \varphi(\beta')$, we have $\varphi(\Tilde{\alpha}) = \varphi (\Tilde {\beta})$.  We can then use Lemma \ref{lem:notsamesym} to  conclude that $\Tilde{\alpha} = g_\alpha\, \sigma(P_1)$ and $\Tilde{\beta} = g_\beta \, \sigma(P_1)$ for some $g_\alpha \in \mathcal{S}(\alpha), g_\beta \in \mathcal{S}(\beta)$.  Since $\tilde{\alpha} \neq 0$ we know that $\varepsilon (\Tilde{\alpha}) \neq 0$.  Thus, we have
\begin{eqnarray*}
m = \varepsilon (\alpha) & = & \varepsilon (\Tilde{\alpha}) + \varepsilon (\alpha')\\
& = &\varepsilon (z^{a_i} \sigma (P_1)) + \varepsilon (\alpha') \\
& = & p_1 + \varepsilon (\alpha').
\end{eqnarray*}
Since $m\leq p_1$, we conclude $\varepsilon(\alpha') = 0$, and hence $\alpha' = 0$.  Similarly, $ \beta' = 0$.  Therefore $\alpha = \Tilde{\alpha} = g_\alpha\, \sigma(P_1)$ and $\beta = \Tilde{\beta} =g_\beta \, \sigma(P_1)$.
\end{proof}

With these results, we now have the tools to prove Theorem \ref{thm:main}.

\subsection{Proof of Theorem \ref{thm:main}}

\begin{proof}
Suppose $Y$ is a circulant graph (or multigraph) which is isospectral to $X$.  The graph $Y$ must be of order $n$ as well.  From Theorem \ref{thm:spec_formula}, we can see that the largest eigenvalue of $X$ is $m$.  Thus, the largest eigenvalue of $Y$ must be $m$ as well.  This implies that $Y$ must have a connection set (or multiset) of size $m$.  We can write $X = \cay{ \zn, A}$ and $Y = \cay{ \zn, B}$ where $A = \{a_1, a_2, \cdots, a_m\}$ and $B= \{b_1, b_2, \cdots, b_m\}$.

Let $\omega$ be a primitive $n^{th}$ root of unity.  For the proof of this theorem, order the eigenvalues in the spectra of $X$ and $Y$ such that $\lambda_i$, the $i^{th}$ value in the spectrum of $X$, is $\lambda_i = \omega^{i\,a_1}+\omega^{i\,a_2} + \cdots + \omega^{i \, a_m}$, and $\mu_i$, the $i^{th}$ eigenvalue in the spectrum of $Y$, is $\mu_i = \omega^{i \, b_1} + \omega^{i \, b_2} + \cdots + \omega^{i\, b_m}$.

Since $X$ and $Y$ are isospectral, there is a $ 0\leq j < n $ such that $\lambda_1 = \mu_j$.  That is to say, $\omega^{a_1} + \omega^{a_2} + \cdots + \omega^{a_m}=\omega^{jb_1} + \omega^{jb_2} + \cdots + \omega^{jb_m}$. Letting $\varphi$ be the usual mapping from $\mathbb{Z} \langle z : z^n = 1 \rangle$ to $\mathbb{Z}[\omega]$ and $ \sigma(P_1) = \sum_{i=0}^{p_1-1}z^{i\frac{n}{p_1}}$, we can use Corollary \ref{cor:sameorsym} to conclude that either \emph{[1]} $z^{a_1}+ z^{a_2}+ \cdots+ z^{a_m}  =  z^{jb_1} + z^{jb_2} + \cdots + z^{jb_m} $ or \emph{[2]} $z^{a_1}+ z^{a_2}+ \cdots+ z^{a_m} = z^{a_i} \, \sigma(P_1)$ for any $a_i \in A$.  I wish to show that in either case, there is exists some $ t \in \zn$ and an ordering of $B$ such that $\omega^{a_i} = \omega^{t\,b_i}$ for all $1\leq i \leq m$.

\emph{Case 1.} \quad  $z^{a_1}+ z^{a_2}+ \cdots+ z^{a_m} =  z^{jb_1} + z^{jb_2} + \cdots + z^{jb_m} $.  This implies that $A = \{ jb_1, jb_2, \cdots , jb_m\}$.  Thus, letting $t=j$, there is an ordering of $B$ such that $a_i = tb_i$ for all $1\leq i \leq m$.

\emph{Case 2.} \quad $z^{a_1}+ z^{a_2}+ \cdots+ z^{a_m} = z^{a_i} \, \sigma(P_1)$ for any $a_i \in A$.  This implies that $\lambda_1 = \omega^{a_i} + \omega^{a_i +\frac{n}{p_1}} + \cdots + \omega^{a_i+(p_1-1)\frac{n}{p_1}}=0$, and $A = \{a_i , a_i + \frac{n}{p_1}, \cdots, a_i + (p_1 -1)\frac{n}{p_1} \}$.  Therefore,
\begin{equation*}
 \lambda_x =\omega^{xa_i} + \omega^{xa_i +x\frac{n}{p_1}} + \cdots + \omega^{xa_i+x(p_1-1)\frac{n}{p_1}} 
 = \begin{cases}
  0 & \mbox{if $p_1 \nmid x$}\\ 
p_1  \omega^{xa_i} & \mbox{if $p_1 \mid x$} \end{cases}
 \end{equation*}
for any $a_i \in A$.  Since $X$ and $Y$ are isospectral, $\mu_1 = 0$ or $\mu_1 = p_1 \omega^{a_1x}$ for some $x \in \zn$.  If $\mu_1 = p_1 \omega^{xa_1}$, then $B = \{ xa_1, xa_1, \cdots, xa_1 \}$ and $\mu_y$ will not equal zero for any $y \in \zn$.  This cannot be the case since $\mu_j = \lambda_1 = 0$.  Therefore, $\mu_1 = 0 $.  By Corollary \ref{cor:sameorsym}, we can conclude that $\mu_1 = \varphi(z^{b_i} \sigma (P_1))$, $B = \{b_i, b_i +\frac{n}{p_1}, \cdots , b_i + (p_1-1)\frac{n}{p_1} \}$, and 
\begin{equation}
\label{eq:mu}
 \mu_y = \begin{cases} 0 & \mbox{if }  p_1 \nmid y \\
 p_1  \omega^{yb_i}& \mbox{if }  p_1 \mid y
   \end{cases}
     \end{equation}
for any $b_i \in B$.

 We know that there must be some $y$ such that $\mu_y = \lambda_{p_1}=p_1 \omega^{p_1a_i}$.  By equation (\ref{eq:mu}) we know that $p_1 \mid y$.  Letting $tp_1=y$ we have:
\begin{eqnarray*}
\lambda_{p_1} &=& \mu_{tp_1}\Rightarrow \\
p_1  \omega^{p_1a_i} & = & p_1  \omega^{p_1tb_i} \Rightarrow \\
(\omega^{p_1a_i})^{1/p_1} & = & (\omega^{p_1tb_i})^{1/p_1} \Rightarrow \\
\omega^{a_i} & = & \omega^{tb_i} \zeta \quad  \quad \mbox{(where } \zeta \mbox{ is a }{p_i}^{th}\mbox{ root of unity)} \\
 & = & \omega^{tb_i + h\frac{n}{p_1}} \quad \mbox{ (for some } 0 \leq h < p_1) \\
& =& \omega^{tb_k}
\end{eqnarray*}
for any $a_i \in A$ and some $b_k \in B$.  We can reorder $B$ such that $\omega^{a_i} = \omega^{tb_i}$.

In either case, we can order $B$ such that $\omega^{a_i} = \omega^{tb_i}$ for all $1\leq i \leq m $ and some $t$.  Similarly, there is a reordering of $B$ (which may be different than the ordering just mentioned) such that for some $k \in \zn$, $\omega^{k\,a_i} = \omega^{b_i}$ for all $i$.  For the remainder of this proof, we will assume that $B$ is ordered in such a way that $\omega^{a_i} = \omega^{tb_i}$ and $\omega^{k\,a_i} = \omega^{b_{\pi(i)}}$ where $\pi$ is a permutation of $\{ 1, 2, \ldots , m \} $.  For each $1 \leq i \leq m$ there must be some $\ell \leq m$ such that $\pi^\ell(i) = i$.  Thus, we have
\begin{equation*}
\omega^{a_i k^\ell t^{\ell -1}} = \omega^{b_{\pi(i)} k^{\ell -1} t^{\ell -1}} = \omega^{a_{\pi(i)} k^{\ell -1} t^{\ell -2}} = \omega^{a_{\pi^{\ell -1}(i)}k} = \omega^{b_{\pi^\ell (i)}} = \omega^{b_i}.
\end{equation*}
Since it is also true that  $\omega^{a_i} = \omega^{t\,b_i}$, it must be the case that $(a_i , n) = (b_i, n)$ for all $1 \leq i \leq m$.

Let $g_i = (a_i,n) = (b_i,n)$, $g=(g_1, g_2, \cdots, g_m)$, and $d=(t,g)$.  Since $\omega^{a_i} = \omega^{tb_i}$ we can conclude that $(b_i,n) = (a_i, n ) = (tb_i, n)$ for all $i$.  Thus, $(t, n/g_i)=1$ for all $i$.  This implies that $(t,n/g)=1$.  Let $d = {p_1}^{r_1}{p_2}^{r_2}\cdots {p_k}^{r_k}$ be the prime factorization of $d$.  Since $d$ divides $g$, we can write the prime factorization of $g$ as $g = {p_1}^{s_1}{p_2}^{s_2} \cdots {p_k}^{s_k}{q_1}^{u_1}{q_1}^{u_2} \cdots {q_{\ell}}^{u_{\ell}}$ where $s_i \geq r_i$ for all $1 \leq i \leq k$.  Let $f = {p_1}^{s_1}{p_2}^{s_2} \cdots {p_k}^{s_k}$.  We can see that $(t,g/f)=1$.

Let $\tau = t+n/f = t + \frac{n}{g} \frac{g}{f}$.  Notice that $(\tau, n/g) = 1$, $(\tau, g/f)=1$, and $(\tau, f) =1$.  Therefore, $(\tau, n) =1$.  We can also see that
\begin{eqnarray*}
\tau b_i &=& (t+\frac n f )b_i\\
&\equiv& tb_i \mod n\\
&\equiv& a_i \mod n\\
\end{eqnarray*}
for all $1\leq i \leq m$.  Now we can define a graph isomorphism, $\psi$, by $\psi (v) = \tau v$  where $v$ is a vertex of a Cayley graph of $\zn$.  Using this isomorphism, we have
\begin{eqnarray*}
Y & \cong & \psi (Y)\\
& = & \cay{ \zn, \{ \tau b_1, \tau b_2, \cdots, \tau b_m\}} \\
& = & \cay{\zn, \{ a_1, a_2, \cdots, a_m \}}\\
& = & X.
\end{eqnarray*}
\end{proof}

\begin{cor}\label{cor:m=2}
Circulant graphs (multigraphs) with connection sets (multisets) containing only one or two elements are characterized by their spectra.
\end{cor}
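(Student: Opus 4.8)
The plan is to observe that Corollary \ref{cor:m=2} is an immediate consequence of Theorem \ref{thm:main}: when the connection set (multiset) has size $m \le 2$, the two numerical side conditions of that theorem are automatically satisfied, so there is essentially nothing to prove beyond checking this.

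Concretely, I would let $X$ be an arbitrary circulant graph or multigraph of order $n = p_1^{r_1}\cdots p_s^{r_s}$ with $p_1 < p_2 < \cdots < p_s$ prime, and with connection set (multiset) of size $m$, where $m = 1$ or $m = 2$. First I check $p_1 \ge m$: since $p_1$ is a prime, $p_1 \ge 2$, and $2 \ge m$ in both cases, so this holds. Next I check the second hypothesis, namely that $s = 1$ or $p_2 > p_1(m-1)$. If $s = 1$ it holds trivially. If $s \ge 2$, then $p_2$ exists, and $p_1(m-1) \le p_1\cdot 1 = p_1 < p_2$ when $m = 2$, while $p_1(m-1) = 0 < p_2$ when $m = 1$; either way the inequality holds. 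Thus the hypotheses of Theorem \ref{thm:main} are met, and any circulant graph isospectral to $X$ must be isomorphic to $X$. Since $X$ was arbitrary, the whole family of circulant graphs (multigraphs) with one- or two-element connection sets (multisets) is characterized by its spectra.

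There is no real obstacle here; the entire mathematical content lives in Theorem \ref{thm:main}, and the role of this corollary is only to point out that the arithmetic conditions $p_1 \ge m$ and $p_2 > p_1(m-1)$ become vacuous for small $m$. The one point worth stating carefully in the write-up is that ``isospectral implies same $m$'' is already built into the proof of Theorem \ref{thm:main} (the largest eigenvalue equals $m$), so we need not worry separately about, say, an isospectral mate of a two-element graph secretly having a one-element connection set.
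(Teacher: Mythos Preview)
Your proposal is correct and matches the paper's intent: the corollary is stated without proof precisely because it is an immediate specialization of Theorem \ref{thm:main}, and your verification that $p_1 \ge m$ and (when $s\ge 2$) $p_2 > p_1(m-1)$ are automatic for $m\le 2$ is exactly the missing one-line check.
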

Cvetkovi\'c proved a similar theorem in his doctoral thesis.  He proved that any 2-regular undirected graph is characterized by its spectrum \protect \cite{Cv2}.  (The term \emph{k-regular} means a graph for which every vertex is adjacent to exactly $k$ other vertices.)  However, the theorem does not explicitly deal with undirected graphs.
%
%
\section{A New Construction}\label{sec:two}
We have just seen that some isospectral circulant graphs must be isomorphic.  This section will provide a way to construct isospectral circulant graphs that are not isomorphic.  
\subsection{Defining the Graphs}\label{sec:def}

\begin{thm}\label{thm:def}
Let $n=2^rp$, where $p$ is an odd prime and $2\leq r $.  Let $X=\cay{\zn,A}$ and $Y=\cay{\zn,B}$ where $A$ and $B$ depend on $r$ and $p$ as follows:
\begin{eqnarray*}
A&=&\{1+i2^r \mid  0\leq i \leq \frac{p-1}2 \} \cup \{1+j2^r +\frac{n}{2} \mid  1 \leq j \leq \frac{p-1}{2} \} \\
B &=& \{1-i2^r \mid  0\leq i \leq \frac{p-1}{2} \} \cup \{1-j2^r +\frac{n}{2} \mid  1 \leq j \leq \frac{p-1}{2} \}.
\end{eqnarray*}
The graphs $X$ and $Y$ are isospectral, nonisomorphic graphs.
\end{thm}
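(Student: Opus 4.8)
The plan is to treat the two assertions separately: the isospectral claim by an explicit matching of eigenvalues through Theorem~\ref{thm:spec_formula}, and the nonisomorphism claim by reducing any putative isomorphism to a combinatorial constraint on $A$ and $B$ that cannot be met.

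\emph{Isospectrality.} Fix a primitive $n$th root of unity $\omega$ and, for $y\in\zn$, set $\zeta=\omega^{y2^r}$ (a $p$th root of unity). Grouping $A$ into the pieces $\{1+i2^r\}$ and $\{1+j2^r+\tfrac n2\}$ and using $\omega^{yn/2}=(-1)^y$, Theorem~\ref{thm:spec_formula} gives the $y$th eigenvalue of $X$ as
\[
\lambda_y=\omega^{y}\Bigl(\sum_{i=0}^{(p-1)/2}\zeta^{\,i}+(-1)^{y}\sum_{j=1}^{(p-1)/2}\zeta^{\,j}\Bigr),
\]
and the same formula with $\zeta$ replaced by $\zeta^{-1}$ gives the $y$th eigenvalue $\mu_y$ of $Y$; note also that $B=2-A$ in $\zn$, so $\mu_y=\omega^{2y}\lambda_{-y}$. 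I would now split into three cases. For $y$ odd the two inner sums cancel except for the $i=0$ term, so $\lambda_y=\mu_y=\omega^{y}$. For $2p\mid y$ one has $\zeta=1$, hence $\lambda_y=\mu_y=p\,\omega^{y}$. For $y$ even with $p\nmid y$, $\zeta$ is a primitive $p$th root of unity and $\{0,\pm1,\dots,\pm\tfrac{p-1}2\}$ is a complete residue system mod $p$, so $1+\sum_{j}(\zeta^{\,j}+\zeta^{-j})=0$; together with $\zeta^{\,n/2}=1$ this forces $\mu_y=-\lambda_y=\lambda_{y+n/2}$. Consequently the map $g$ that fixes every odd residue and every multiple of $2p$ and sends each remaining residue $y$ to $y+\tfrac n2$ is a bijection of $\zn$ with $\mu_y=\lambda_{g(y)}$ for all $y$, so $Spec(X)=Spec(Y)$.

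\emph{Nonisomorphism.} Suppose $\phi\colon X\to Y$ is a digraph isomorphism. Since translations are automorphisms of any Cayley digraph, after composing with one I may assume $\phi(0)=0$; then $\phi$ carries the set of out-neighbors of $0$ to the set of out-neighbors of $0$, i.e.\ $\phi(A)=B$, and likewise $\phi(-A)=-B$. Hence $\phi$ restricts to an isomorphism of the undirected ``digon'' subgraphs $\cay{\zn,\,A\cap(-A)}\cong\cay{\zn,\,B\cap(-B)}$, and, for each fixed $k$, of the subgraphs on the pairs joined in both directions by walks of length $k$. Writing $\zn\cong\mathbb{Z}_{2^r}\times\mathbb{Z}_p$, one checks that $A$ and $B$ both lie in the residue classes $1$ and $1+2^{r-1}$ mod $2^r$, and that inside the corresponding cosets of the order-$p$ subgroup $A$ occupies two complementary ``halves'' of $\mathbb{Z}_p$ while $B$ occupies two overlapping ones. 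When $r=2$ this already suffices: a computation gives $A\cap(-A)=A\setminus\{2p-1\}$ and $B\cap(-B)=B\setminus\{1\}$, and in CRT coordinates the two $\mathbb{Z}_p$-fibres of $A\cap(-A)$ carry different subsets of $\mathbb{Z}_p$ whereas those of $B\cap(-B)$ carry the same subset; since this ``fibre pair'' is unchanged by any unit multiplier, the two circulant graphs are not affinely equivalent, and as $\zn=\mathbb{Z}_{4p}$ is a CI-group they are not isomorphic at all---a contradiction. For $r\ge 3$ the plain digon subgraph is empty ($A\cap(-A)=\emptyset$), so I would instead use the smallest $k$ admitting a ``$k$-digon''; the mod-$2^r$ structure forces this to be $k=2^{r-2}$, and $k$ is a spectral quantity (it is the least $k$ with $\mathrm{tr}(M^{2k})-\tfrac1n\mathrm{tr}(M^{k})^2>0$, where $M$ is the adjacency matrix), hence the same for $X$ and $Y$, so the fibre comparison should go through on the $2^{r-2}$-digon subgraph.

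The main obstacle is exactly the $r\ge3$ case: $\zn=\mathbb{Z}_{2^rp}$ is no longer a CI-group, so non-affine isomorphisms of the relevant circulant subgraph are a priori possible and must be ruled out. One self-contained route exploits the rigidity of $X$: all of $A$ maps onto a single arc of the directed cycle $\zn/\langle2^{r-1}\rangle\cong C_{2^{r-1}}$, so $X$ is a ``bundle'' over $C_{2^{r-1}}$ with fibre $\mathbb{Z}_{2p}$, transition set $L_A$, and holonomy $+1$ in the fibre; reversing each fibre converts this into the bundle with transition $-L_A=L_B$ and holonomy $-1$, so the whole difficulty reduces to showing the sign of the holonomy cannot be absorbed by any fibre automorphism. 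I expect that step---or whatever appeal to the structure theory of circulant-graph isomorphisms replaces it---to be the crux of the argument.
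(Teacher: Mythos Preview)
Your isospectrality argument is correct and in fact tidier than the paper's. You compute $\lambda_y$ and $\mu_y$ explicitly enough to read off the matching $\lambda_y=\mu_y$ (for $y$ odd or $2p\mid y$) and $\lambda_y=\mu_{y+n/2}$ (for $y$ even, $p\nmid y$). The paper reaches the same bijection but through a longer case analysis (its Lemma~\ref{rmrk:samespec}); your use of $B=2-A$, hence $\mu_y=\omega^{2y}\lambda_{-y}$, is a genuine simplification.

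The nonisomorphism half, however, has a real gap. You yourself flag it: for $r\ge 3$ you only sketch a ``bundle/holonomy'' reduction and say the decisive step is still to be done. That is exactly where the content lies, and your proposed invariant (the $k$-digon subgraph with $k=2^{r-2}$) does not obviously survive an arbitrary isomorphism of directed circulants once the CI property fails. The paper avoids this difficulty entirely by a different route: it proves (Theorem~\ref{thm:norepeig}) that $X$ has \emph{simple spectrum}, using the Lam--Leung description of $\mathbb{N}G\cap\ker\varphi$ to analyze the element $\alpha$ corresponding to $\lambda_x-\lambda_y$. Simplicity of the spectrum lets one invoke Elspas--Turner, so any isomorphism $X\cong Y$ would force $A=qB$ for some unit $q\in\zn$, regardless of $r$. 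The final step is then a one-line divisibility check (Lemma~\ref{rmrk:nonequiv}): exactly one of $A$, $B$ contains a multiple of $p$ (namely $1+\ell 2^r$ and $1+\ell 2^r+n/2$ for the unique $0<\ell<p$ with $p\mid 1+\ell 2^r$), so $A$ and $B$ cannot be unit-multiples of one another.

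In short, the idea you are missing is that ``no repeated eigenvalues'' is the lever that makes \'Ad\'am's conjecture applicable for every $r\ge 2$, and that this is where the real work goes. Your $r=2$ case is salvageable via the CI property, but even there the paper's invariant (presence of a multiple of $p$ in the connection set) is simpler than your fibre-pair comparison.
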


Sections \ref{sec:isospec} through \ref{sec:noniso} are dedicated to proving this theorem.  Whenever $X$ and $Y$ are referred to in this chapter, it should be assumed that $X$ and $Y$ are the graphs defined above. 

\begin{exmp}\label{expl:n=12}
Let $n=2^2 \cdot 3$.  Then we have, $A = \{1,5\} \cup \{11\}$ and $B=\{1,9\} \cup \{3\}$.  Thus, $X=\cay{\mathbb{Z}_{12}, \{1,5,11 \} }$ and $Y=\cay{\mathbb{Z}_{12}, \{1,3,9\} }$.  These two graphs are shown below.
\begin{figure}[h]\label{fig:ex_size12}
\begin{centering}
\includegraphics[width=5in]{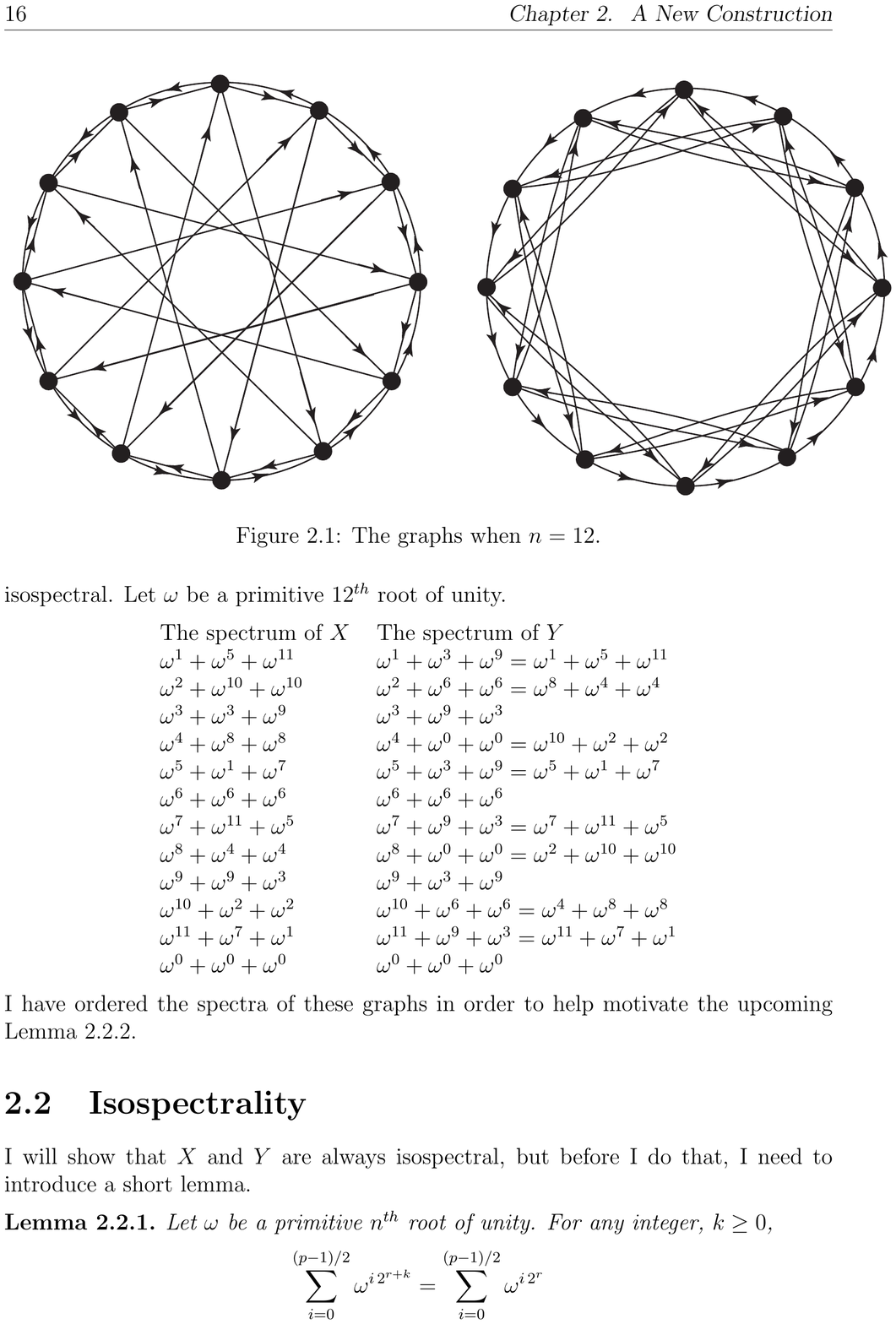}
\caption{The graphs when $n=12$.}
\end{centering}
\end{figure}
We can verify that these graphs are isospectral.  Let $\omega$ be a primitive $12^{th}$ root of unity.
\begin{equation*}
\begin{array}{ll}
\mbox{The spectrum of $X$  }           &   \mbox{The spectrum of $Y$ }\\
\omega^1+\omega^5+\omega^{11}       &    \omega^1+\omega^3+\omega^9 =\omega^1+\omega^5+\omega^{11} \\
\omega^2+\omega^{10}+\omega^{10}       &    \omega^2+\omega^6+\omega^6 = \omega^8+\omega^4+\omega^4\\
\omega^3+\omega^3+\omega^9     &    \omega^3+\omega^9+\omega^3\\
\omega^4+\omega^8+\omega^8       &    \omega^4+\omega^0+\omega^0  = \omega^{10}+\omega^2+\omega^2\\
\omega^5+\omega^1+\omega^7       &    \omega^5+\omega^3+\omega^9  = \omega^5+\omega^1+\omega^7\\
\omega^6+\omega^6+\omega^6       &    \omega^6+\omega^6+\omega^6\\
\omega^7+\omega^{11}+\omega^5       &    \omega^7+\omega^9+\omega^3  = \omega^7+\omega^{11}+\omega^5\\
\omega^8+\omega^4+\omega^4       &    \omega^8+\omega^0+\omega^0  = \omega^2+\omega^{10}+\omega^{10} \\
\omega^9+\omega^9+\omega^3       &    \omega^9+\omega^3+\omega^9\\
\omega^{10}+\omega^2+\omega^2       &    \omega^{10}+\omega^6+\omega^6  = \omega^4+\omega^8+\omega^8\\
\omega^{11}+\omega^7+\omega^1       &    \omega^{11}+\omega^9+\omega^3  = \omega^{11}+\omega^7+\omega^1\\
\omega^0+\omega^0+\omega^0       &    \omega^0+\omega^0+\omega^0
\end{array}
\end{equation*}
The spectra of these graphs are ordered in such a way to help motivate the upcoming Lemma \ref{rmrk:samespec}.
\end{exmp}

\subsection{Isospectrality}\label{sec:isospec}

The goal of this section is to show that $X$ and $Y$ are isospectral.

\begin{lem}\label{lem:sumhelp}
Let $n=2^rp$, and let $\omega$ be a primitive $n^{th}$ root of unity.  For any integer, $k \geq 0$,
\begin{equation*}
\sum_{i=0}^{(p-1)/2} \omega^{i\, 2^{r+k}} = \sum_{i=0}^{(p-1)/2} \omega^{i \, 2^r}
\end{equation*}
\end{lem}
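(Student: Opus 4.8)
The plan is to push the identity down from $n$th roots of unity to $p$th roots of unity, where both sides become ordinary partial geometric sums, and then to exploit the pairing $i\leftrightarrow p-i$ on exponents together with $1+\zeta+\cdots+\zeta^{p-1}=0$. First I would set $\zeta=\omega^{2^{r}}$. Since $p$ is odd, $\gcd(n,2^{r})=2^{r}$, so $\zeta$ has order $n/2^{r}=p$ and is a primitive $p$th root of unity; in particular $1+\zeta+\cdots+\zeta^{p-1}=0$. Reducing exponents modulo $n=2^{r}p$ and writing $i\,2^{r+k}=2^{r}(2^{k}i)$ gives $\omega^{i\,2^{r+k}}=\zeta^{2^{k}i}=\bigl(\zeta^{2^{k}}\bigr)^{i}$, so the claim is equivalent to
\[
\sum_{i=0}^{(p-1)/2}\bigl(\zeta^{2^{k}}\bigr)^{i}=\sum_{i=0}^{(p-1)/2}\zeta^{i}.
\]
Since $\gcd(2^{k},p)=1$, the element $\zeta^{2^{k}}$ is again a primitive $p$th root of unity, so it suffices to understand the partial sum $S(\eta):=\sum_{i=0}^{(p-1)/2}\eta^{i}$, for an arbitrary primitive $p$th root $\eta$, well enough to see that $S(\eta)=S(\eta^{2})$; iterating then gives $S(\zeta)=S(\zeta^{2})=\cdots=S(\zeta^{2^{k}})$, which is what we want.

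The structural input I would use is that $\{1,\dots,(p-1)/2\}$ and $\{(p+1)/2,\dots,p-1\}=\{\,p-i : 1\le i\le (p-1)/2\,\}$ partition $\{1,\dots,p-1\}$. Together with $\sum_{i=1}^{p-1}\eta^{i}=-1$ this gives
\[
S(\eta)+S(\eta^{-1})=2+\sum_{i=1}^{(p-1)/2}\bigl(\eta^{i}+\eta^{-i}\bigr)=2+\sum_{i=1}^{p-1}\eta^{i}=1 ,
\]
so $S(\eta)$ is already pinned down up to this symmetry. To finish I would analyse how the exponents reorganise under doubling: list the residues $2i\bmod p$ for $0\le i\le(p-1)/2$ (these are the even residues $0,2,4,\dots,p-1$), separate those already lying in the ``lower half'' $\{0,1,\dots,(p-1)/2\}$ from the rest, and use $1+\eta+\cdots+\eta^{p-1}=0$ to fold the remaining ``upper half'' exponents back down, then compare the outcome with $S(\eta)$.

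The step I expect to be the genuine obstacle is exactly this last comparison: making the residue bookkeeping close so that the terms gained and the terms lost under the doubling map cancel precisely and uniformly in $p$ --- and, if it is needed there, bringing in the hypothesis $r\ge 2$, which the reduction above has not yet used. My instinct is that the cleanest way to organise it is to keep track of everything through the involution $a\mapsto p-a$ on $\{1,\dots,p-1\}$, since that involution is what the relation $1+\eta+\cdots+\eta^{p-1}=0$ really governs; this should reduce the lemma to a purely combinatorial statement about how doubling interacts with the two halves of $\{1,\dots,p-1\}$. Everything earlier --- the reduction to $p$th roots, the passage from general $k$ to $k=1$, and the identity $S(\eta)+S(\eta^{-1})=1$ --- is routine.
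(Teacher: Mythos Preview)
Your reduction is clean and correct: setting $\zeta=\omega^{2^{r}}$ (a primitive $p$th root of unity) reduces the stated identity to
\[
S(\zeta^{2^{k}})=S(\zeta),\qquad S(\eta):=\sum_{i=0}^{(p-1)/2}\eta^{i},
\]
and it suffices to show $S(\eta)=S(\eta^{2})$ for a primitive $p$th root $\eta$. The reason your ``genuine obstacle'' refuses to close is that this identity is simply false. Take $p=3$: then $S(\eta)=1+\eta$ and $S(\eta^{2})=1+\eta^{2}$, and these differ since $\eta\neq\eta^{2}$. Concretely, with $n=12$, $\omega=e^{2\pi i/12}$ and $k=1$, the left side of the lemma is $1+\omega^{8}$ while the right side is $1+\omega^{4}$. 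So the lemma as written cannot be proved, and no amount of residue bookkeeping on the doubling map will make those half-sums match.

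What the paper actually proves --- and what is actually invoked later, in Case~3 of the isospectrality argument --- is the \emph{full}-sum identity
\[
\sum_{i=0}^{p-1}\omega^{i\,2^{s}}=\sum_{i=0}^{p-1}\omega^{i\,2^{s-1}}\qquad(s>r),
\]
obtained by splitting $\{0,\dots,p-1\}$ into $\{0,\dots,(p-1)/2\}$ and $\{(p+1)/2,\dots,p-1\}$, reindexing the second half, and observing that the two pieces recombine into the full sum at exponent $2^{s-1}$. The upper limit $(p-1)/2$ in the lemma's statement is a misprint for $p-1$. Note that your own reduction disposes of the corrected statement instantly: after passing to $\zeta$, both full sums are $\sum_{i=0}^{p-1}\zeta^{i\cdot 2^{k}}$ and $\sum_{i=0}^{p-1}\zeta^{i}$, and since $2^{k}$ is a unit mod $p$ these are the same sum (indeed both equal $0$). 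In that sense your route is more direct than the paper's explicit index manipulation; it just has to be aimed at the right target.
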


\begin{proof}
For any $s > r$ we have
\begin{eqnarray*}
\sum_{i=0}^{p-1} \omega^{i\,2^s} &=& \sum_{i=0}^{(p-1)/2} \omega^{i\, 2^s} + \sum_{i=(p+1)/2}^{p-1} \omega^{i \, 2^s}\\
&=& \sum_{i=0}^{(p-1)/2} \omega^{i\, 2^s} + \sum_{i=0}^{(p-3)/2} \omega^{(i+\frac{p+1}{2})2^s}\\
&=& \sum_{i=0}^{(p-1)/2} \omega^{i\, 2^s} + \sum_{i=0}^{(p-3)/2} \omega^{i\, 2^s + n2^{s-r-1} +2^{s-1}}\\
&=& \sum_{i=0}^{(p-1)/2} \omega^{(2i)2^{s-1}} + \sum_{i=0}^{(p-3)/2} \omega^{(2i+1)2^{s-1}}\\
&=& \sum_{i=0}^{p-1} \omega^{i\, 2^{s-1}}
\end{eqnarray*}
By induction on the difference of $s$ and $r$, we can conclude that Lemma \ref{lem:sumhelp} is true.
\end{proof}

From now on, order the spectra of $X$ and $Y$ such that $\lambda_x$ and $\mu_x$, the $x^{th}$ eigenvalues in the spectra of $X$ and $Y$ respectively, are
\begin{eqnarray*}
\lambda_x &=&\sum_{i=0}^{(p-1)/2}\omega^{x(1+i2^r)} + \sum_{j=1}^{(p-1)/2} \omega^{x(1+j2^r+\frac{n}{2})}\\
\mu_x &=&\sum_{i=0}^{(p-1)/2}\omega^{x(1-i2^r)} + \sum_{j=1}^{(p-1)/2} \omega^{x(1-j2^r+\frac{n}{2})}\\
\end{eqnarray*}
(The spectra in Example \ref{expl:n=12} are ordered this way.)  In order to make calculations a bit clearer, I will also break down the eigenvalues of $X$ and $Y$ into two parts. Let
\begin{equation*} \begin{array}{lr}
\displaystyle \lambda_{x,\alpha} =\sum_{i=0}^{(p-1)/2}\omega^{x(1+i2^r)}, & \displaystyle \lambda_{x,\beta} = \sum_{j=1}^{(p-1)/2} \omega^{x(1+j2^r+\frac{n}{2})},\\ \\
\displaystyle \mu_{x,\alpha} = \sum_{i=0}^{(p-1)/2}\omega^{x(1-i2^r)}, \mbox{ and }  & \displaystyle \mu_{x,\beta} =  \sum_{j=1}^{(p-1)/2} \omega^{x(1-j2^r+\frac{n}{2})}.
\end{array} \end{equation*}

We can see that $\lambda_{x,\alpha} + \lambda_{x,\beta} = \lambda_x$ and $\mu_{x,\alpha} + \mu_{x,\beta} = \mu_x$.  The next lemma proves that the spectra of $X$ and $Y$ are the same.

\begin{lem}\label{rmrk:samespec}
Letting $\mu_x$ and $\lambda_x$ be as defined above, we have:
\begin{equation*}
\lambda_x = \begin{cases} \mu_{x+n/2} & \text{ if $(x,n) = 2^m$ for some $m>0$}\\
\mu_x & \text{ otherwise } \end{cases}.
\end{equation*}
\end{lem}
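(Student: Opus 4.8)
The plan is to collapse all four partial sums to sums of $p$-th roots of unity, carry two ``half-sums'' through symbolically, and then compare them in a handful of arithmetic cases. Put $\zeta=\omega^{2^r}$; since $\omega$ has order $n=2^rp$, this is a primitive $p$-th root of unity. Two elementary facts will be used repeatedly: $\omega^{n/2}=-1$ (it has multiplicative order $2$), so $\omega^{xn/2}=(-1)^x$; and $\zeta^{n/2}=1$, because $2^r\cdot(n/2)=2^{r-1}n$. Factoring $\omega^x$ out of each piece, $\lambda_{x,\alpha}=\omega^x\sum_{i=0}^{(p-1)/2}(\zeta^x)^i$ and $\lambda_{x,\beta}=(-1)^x\omega^x\sum_{j=1}^{(p-1)/2}(\zeta^x)^j$, with $\mu_{x,\alpha},\mu_{x,\beta}$ the same but with $\zeta^x$ replaced by $\zeta^{-x}$.

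First I would dispose of the case $p\mid x$: here $\zeta^x=1$, each sum is just a term count, and $\lambda_x=\omega^x\bigl(\frac{p+1}{2}+(-1)^x\frac{p-1}{2}\bigr)=\mu_x$; moreover $(x,n)$ is then divisible by the odd prime $p$, so it is never a positive power of $2$, which places us in the ``otherwise'' branch where $\lambda_x=\mu_x$ is exactly the claim. So assume $p\nmid x$ and set $\eta=\zeta^x$, a primitive $p$-th root of unity, $S_1=\sum_{j=1}^{(p-1)/2}\eta^{j}$, $S_2=\sum_{j=1}^{(p-1)/2}\eta^{-j}$. Since $\eta^{-j}=\eta^{p-j}$ one gets the single relation that does all the work, $S_1+S_2=\sum_{k=1}^{p-1}\eta^k=-1$, and a short computation gives $\lambda_x=\omega^x\bigl(1+(1+(-1)^x)S_1\bigr)$ and $\mu_x=\omega^x\bigl(1+(1+(-1)^x)S_2\bigr)$.

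Now finish by parity of $x$ (still assuming $p\nmid x$). If $x$ is odd, the coefficient $1+(-1)^x=0$ kills the $S$-terms, so $\lambda_x=\omega^x=\mu_x$; and $(x,n)$ is odd, hence not a positive power of $2$, matching the ``otherwise'' branch. If $x$ is even then $(x,n)=2^m$ with $m\geq 1$ (and conversely every $x$ with $(x,n)$ a positive power of $2$ is even and prime to $p$), so I must show $\lambda_x=\mu_{x+n/2}$. With $x'=x+n/2$, note $n/2=2^{r-1}p$ is even because $r\geq 2$, so $x'$ is even; also $\omega^{x'}=\omega^{n/2}\omega^x=-\omega^x$ and $\zeta^{x'}=\zeta^{n/2}\zeta^x=\eta$. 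Feeding $x'$ into the formula for $\mu$ (same $\eta$, hence same $S_2$) yields $\mu_{x+n/2}=-\omega^x(1+2S_2)=\omega^x(1+2S_1)=\lambda_x$, the middle equality being $S_1+S_2=-1$ once more.

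I expect the real content to be minimal: once the partial sums are expressed through $S_1$ and $S_2$, everything reduces to $S_1+S_2=-1$. The only places that need care are the three root-of-unity identities ($\omega^{n/2}=-1$, $\zeta^{n/2}=1$, and $x+n/2$ even — the last is where $r\geq 2$ enters), together with the verification that the statement's dichotomy ``$(x,n)=2^m$ with $m>0$'' versus ``otherwise'' coincides with ``$x$ even and $p\nmid x$'' versus its negation. This route does not seem to require Lemma \ref{lem:sumhelp}, since passing from $x$ to $x+n/2$ fixes $\zeta^x$ rather than permuting the summands; the ordering chosen in Example \ref{expl:n=12} suggests the author may instead route the even case through that lemma.
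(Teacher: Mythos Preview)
Your argument is correct and noticeably cleaner than the paper's. The paper proceeds in the same three cases but works at the level of the four partial sums $\lambda_{x,\alpha},\lambda_{x,\beta},\mu_{x,\alpha},\mu_{x,\beta}$: in the coprime case it shows the cross-identities $\lambda_{x,\alpha}=\mu_{x,\beta}$ and $\mu_{x,\alpha}=\lambda_{x,\beta}$ by reindexing to a full sum of $p$-th roots; in the $p\mid x$ case it computes directly; and in the $(x,n)=2^m$ case it again proves cross-identities (now with $x+n/2$ on the $\mu$ side), but the reindexing produces $\sum_{i=0}^{p-1}\omega^{iy2^{r+m-1}}$ and the paper invokes Lemma~\ref{lem:sumhelp} to bring the exponent back to $2^r$ before concluding the sum vanishes. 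Your route sidesteps all of this by compressing the data into the closed forms $\lambda_x=\omega^x(1+(1+(-1)^x)S_1)$ and $\mu_x=\omega^x(1+(1+(-1)^x)S_2)$ and then using only $S_1+S_2=-1$; the observation $\zeta^{n/2}=1$ is exactly what makes Lemma~\ref{lem:sumhelp} unnecessary, as you anticipated. The paper's decomposition does reveal the finer fact that the $\alpha$- and $\beta$-pieces swap, but that extra structure is not used anywhere, so your shortcut loses nothing for the purposes of the lemma and isolates the one place $r\geq 2$ is genuinely needed.
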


\begin{proof}
There are three cases based on whether $p$ or 2 divide $x$.

\emph{Case 1.} $(x,n)=1$.  \quad In this case we have 
\begin{eqnarray*}
\lambda_{x,\alpha}-\mu_{x,\beta} &=&  \sum_{i=0}^{(p-1)/2}\omega^{x(1+i2^r)} + \omega^{\frac{n}{2}} \sum_{j=1}^{(p-1)/2} \omega^{x(1-j2^r+\frac{n}{2})}\\
&=&  \sum_{i=0}^{(p-1)/2}\omega^{x+xi2^r} + \sum_{j=1}^{(p-1)/2} \omega^{x-xj2^r+n(\frac{x+1}{2})}\\
&=& \omega^x \left( \sum_{i=0}^{(p-1)/2}\omega^{(x \, i )2^r} + \sum_{j=1}^{(p-1)/2} \omega^{(-x \, j)2^r} \right)\\
&=& \omega^x \left( \sum_{i=0}^{p-1}\omega^{(x \, i )2^r} \right)\\
&=& \omega^x (0)\\
& =& 0.
\end{eqnarray*}
Thus, $\lambda_{x,\alpha} = \mu_{x,\beta}$. Similarly, $\mu_{x,\alpha} - \lambda_{x,\beta} = 0$
Therefore, $\mu_{x,\alpha} = \lambda_{x,\beta}$ and $\lambda_x = \mu_x$.\\

\emph{Case 2.} $p | x$.  \quad Letting $x = py$, we have
\begin{eqnarray*}
\lambda_x &=&\sum_{i=0}^{(p-1)/2}\omega^{py(1+i2^r)} + \sum_{j=1}^{(p-1)/2} \omega^{py(1+j2^r+\frac{n}{2})}\\
&=&\sum_{i=0}^{(p-1)/2}\omega^{py+(i y)n} + \sum_{j=1}^{(p-1)/2} \omega^{py+(jy)n+py\frac{n}{2}}\\
&=&\sum_{i=0}^{(p-1)/2}\omega^{py-(i y)n} + \sum_{j=1}^{(p-1)/2} \omega^{py-(jy)n+py\frac{n}{2}}\\
 &=&\sum_{i=0}^{(p-1)/2}\omega^{py(1-i2^r)} + \sum_{j=1}^{(p-1)/2} \omega^{py(1-j2^r+\frac{n}{2})}\\
 & = & \mu_x.
 \end{eqnarray*}

 \emph{Case 3.} $(x,n) = 2^m$ for some $m>0$. \quad Letting $x = y2^m$, where $(y,n) = 1$, we have
 \begin{eqnarray*}
 \lambda_{x,\alpha} - \mu_{x+\frac{n}{2}, \beta} &=& \sum_{i=0}^{(p-1)/2}\omega^{y2^m(1+i2^r)} + \omega^{n/2} \sum_{j=1}^{(p-1)/2} \omega^{(y2^m+\frac{n}{2})(1-j2^r+\frac{n}{2})}\\
 &=& \sum_{i=0}^{(p-1)/2}\omega^{y2^m+i\,y2^{r+m}} + \sum_{j=1}^{(p-1)/2} \omega^{y2^m - j\,y2^{r+m}}\\
 &=& \sum_{i=0}^{(p-1)/2}\omega^{y2^m+i\,y2^{r+m}} + \sum_{j=0}^{(p-3)/2} \omega^{y2^m - (\frac{p-1}{2}-j)y2^{r+m}}\\
 &=&\omega^{y2^m} \left( \sum_{i=0}^{(p-1)/2}\omega^{(2i)\,y2^{r+m-1}} + \sum_{j=0}^{(p-3)/2} \omega^{(2j+1)y2^{r+m-1}} \right) \\
 &=&\omega^{y2^m}\sum_{i=0}^{p-1} \omega^{i\,y 2^{r+m-1}}\\
 &=&\omega^{y2^m} \sum_{i=0}^{p-1} \omega^{i 2^{r}} \quad \quad \mbox{(by Lemma \ref{lem:sumhelp})}\\
&=& 0.
\end{eqnarray*}
Therefore, $\lambda_{x, \alpha} = \mu_{x+\frac{n}{2}, \beta}$.  Similarly, $\mu_{x+\frac{n}{2},\alpha} = \lambda_{x,\beta}$.  Thus, $\lambda_x = \mu_{x+\frac{n}{2}}$.
\end{proof}

\subsection{No Repeated Eigenvalues}

The next goal is to prove that these graphs have no repeated eigenvalues in their spectra.  This is needed in section \ref{sec:noniso} to show that the graphs are not isomorphic.  In this section we will again be relying heavily on on the group ring, $\mathbb{Z}G$, and homomorphism, $\varphi$, from section \ref{sec:groupring}.  Since we have proved in the previous section that the graphs have the same spectrum, we only need to prove that one of the graphs has no repeated eigenvalues.

\begin{thm}\label{thm:norepeig}
Let $n = 2^rp$ where $r$ is an integer such that $r \geq 2$ and $p$ is any odd prime, and let 
\begin{equation*}
A = \{ 1+i2^r \mid 0 \leq i \leq (p-1)/2 \} \cup \{1+j2^r + p2^{r-1} \mid 1 \leq j \leq (p-1)/2 \}.
\end{equation*}
If $X = \cay{\zn, A}$, then $X$ has no repeated eigenvalues.
\end{thm}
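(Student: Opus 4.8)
The plan is to compute every eigenvalue $\lambda_x=\sum_{a\in A}\omega^{xa}$ (Theorem~\ref{thm:spec_formula}) in closed form, partitioning $\zn$ into three classes according to the parity of $x$ and whether $p\mid x$, and then to check separately that $x\mapsto\lambda_x$ is injective on each class and that the three classes have disjoint images. Put $\xi=\omega^{2^r}$, a primitive $p$th root of unity, so $\omega^{xi2^r}=\xi^{xi}$. Pulling $\omega^x$ out of the defining sum and using $\omega^{xn/2}=(-1)^x$ (recall $n/2=p\,2^{r-1}$ and that the second half of $A$ consists of the elements $1+j2^r+n/2$), the two halves of $A$ combine to
\[
\lambda_x=\omega^x\Bigl(\sum_{i=0}^{(p-1)/2}\xi^{xi}+(-1)^x\sum_{j=1}^{(p-1)/2}\xi^{xj}\Bigr).
\]
If $x$ is odd the two inner sums cancel down to $\xi^{0}$, so $\lambda_x=\omega^x$ (regardless of whether $p\mid x$). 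If $x$ is even and $p\mid x$ each $\xi^{xi}=1$, so $\lambda_x=p\,\omega^x$. If $x$ is even and $p\nmid x$, then using $\sum_{i=0}^{p-1}\xi^{xi}=0$ one gets $\lambda_x=\omega^xs_x$ with
\[
s_x:=1+2\sum_{i=1}^{(p-1)/2}\xi^{xi}=\sum_{i=1}^{(p-1)/2}\bigl(\xi^{xi}-\xi^{-xi}\bigr)=2T_x-1,\qquad T_x:=\sum_{i=0}^{(p-1)/2}\xi^{xi};
\]
here $\overline{s_x}=-s_x$, so $s_x$ is purely imaginary, and $T_x$ is an algebraic integer, so $s_x\neq0$ (else $1/2\in\mathbb Z[\xi]$). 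Also $s_x$ depends on $x$ only through $x\bmod p$.

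Injectivity of $\lambda_x=\omega^x$ on the first class and of $\lambda_x=p\,\omega^x$ on the second is immediate. To separate the three classes, compare absolute values: $|\lambda_x|$ is $1$ on the first class and $p$ on the second, while on the third $|\lambda_x|=|s_x|\le\sum_{i=1}^{(p-1)/2}|\xi^{xi}-\xi^{-xi}|\le p-1<p$, so the third class meets neither the second nor the first, the latter because $|s_x|=1$ together with $s_x$ purely imaginary would force $s_x^2=-1$, i.e.\ $\sqrt{-1}\in\mathbb Q(\zeta_p)$, impossible for $p$ odd. Hence only injectivity within the third class remains.

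So suppose $\lambda_x=\lambda_y$ with $x,y$ even and $p\nmid xy$. Equating absolute values and using that $s_x,s_y$ are purely imaginary gives $s_x^2=s_y^2$, i.e.\ $(2T_x-1)^2=(2T_y-1)^2$, and factoring in the integral domain $\mathbb Z[\xi]$ yields $T_x=T_y$ or $T_x+T_y=1$. Now $T_x=\varphi(\alpha_x)$, where $\alpha_x=\sum_{i=0}^{(p-1)/2}z^{2^r xi}$ is the $\sigma$ of a $(p+1)/2$-element subset $U_x$ of the order-$p$ subgroup $P_2=\langle z^{2^r}\rangle$, and likewise for $y$. In the first case, after cancelling the common support of $\alpha_x,\alpha_y$, Lemma~\ref{lem:notsamesym} applied to $P_2$ (both remainders have $\varepsilon<p$) forces $U_x=U_y$; in the second case $\alpha_x+\alpha_y-z^0\in\mathbb N P_2\cap\ker\varphi$ has augmentation $p$, so Lemma~\ref{lem:corlam}(1) (with $P_2$ in place of $G$) makes it $\sigma(P_2)$, whence $U_x$ and $U_y$ cover $P_2$ with $z^0$ their only common element. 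Translating to $\mathbb Z_p$ with $I=\{0,1,\dots,(p-1)/2\}$, these say that $\{xi\bmod p:1\le i\le (p-1)/2\}$ equals, respectively, $\{yi\bmod p:1\le i\le (p-1)/2\}$ or its negative; summing the elements of each side and using $1+2+\cdots+\frac{p-1}{2}=\frac{p^2-1}{8}\not\equiv0\pmod p$ forces $x\equiv y\pmod p$ or $x\equiv-y\pmod p$, respectively. In the first case $s_x=s_y\neq0$, so $\lambda_x=\lambda_y$ gives $\omega^x=\omega^y$, hence $x=y$. In the second case $p\nmid x$ precludes also $x\equiv y\pmod p$, so $s_x=-s_y$, and $\lambda_x=\lambda_y$ becomes $-\omega^xs_y=\omega^ys_y$, i.e.\ $\omega^{x-y}=-1$, i.e.\ $x\equiv y+n/2\pmod n$; but $p\mid n/2$ then gives $x\equiv y\pmod p$, contradicting $x\equiv-y\not\equiv y\pmod p$. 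So the second case is vacuous, and $x=y$ in all cases, i.e.\ $X$ has no repeated eigenvalue.

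The computations through the separation of the three classes (the closed forms, the bound $|s_x|<p$, and $\sqrt{-1}\notin\mathbb Q(\zeta_p)$) are routine; the real obstacle is the last paragraph, where the single scalar identity $\lambda_x=\lambda_y$ must be converted into $x\equiv y\pmod n$ inside the third class. That hinges on passing from $|s_x|=|s_y|$ to $x\equiv\pm y\pmod p$ via the factorization in $\mathbb Z[\xi]$ and the prime-order structure of $\ker\varphi$, together with the combinatorial fact that the ``half-interval'' $\{1,\dots,(p-1)/2\}$ has trivial multiplicative stabilizer in $\mathbb Z_p$.
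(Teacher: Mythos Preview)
Your argument is correct and constitutes a genuinely different proof from the paper's. The paper never computes $\lambda_x$ in closed form; instead it forms the single group-ring element $\alpha\in\mathbb{N}G$ with $\varphi(\alpha)=\lambda_x-\lambda_y$, uses Lemma~\ref{lem:corlam} in the full group $G$ (order $2^rp$) together with the count $\varepsilon(\alpha)=2p$ to force $\alpha\in\mathbb{N}G\,\sigma(H_2)$ in every case, and then runs a rather lengthy coefficient-bookkeeping argument (the three cases and two sub-cases of the paper) to squeeze out $x\equiv y\pmod n$. Your route replaces all of that with an explicit evaluation of $\lambda_x$ in each of the three parity/$p$-divisibility classes, separates the classes by the modulus $|\lambda_x|$ (using $|s_x|<p$ and the cyclotomic fact $i\notin\mathbb Q(\zeta_p)$), and then handles the only nontrivial class by reducing to identities among the half-sums $T_x\in\mathbb Z[\zeta_p]$, where the group-ring lemmas are invoked only for the prime-order group $P_2$. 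What you gain is a much shorter and more transparent argument, at the cost of importing two small pieces of algebraic number theory (that $\sqrt{-1}\notin\mathbb Q(\zeta_p)$ and that $1/2\notin\mathbb Z[\zeta_p]$) that the paper's purely combinatorial proof avoids. One minor remark: in the second sub-case you do not actually need the clause ``$p\nmid x$ precludes also $x\equiv y$'' to obtain $s_x=-s_y$, since $T_x+T_y=1$ already gives $s_x+s_y=2(T_x+T_y)-2=0$ directly.
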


\begin{proof}
Order the eigenvalues of $X$ so that the $x^{th}$ eigenvalue of the spectrum of $X$ is 
\begin{equation}
\lambda_x = \sum_{i=0}^{(p-1)/2} \omega^{x(1+i2^r)} + \sum_{j=1}^{(p-1)/2} \omega^{x(1+j2^r+n/2)}.
\end{equation}

Suppose that there is some $y$ such that $\lambda_x = \lambda_y$ (in order to show that $x \equiv y \mod n$).  Therefore, $\lambda_x - \lambda_y =\lambda_x + \omega^{n/2}\lambda_y = 0$.  Let $\alpha \in \mathbb{N}G$ be defined by
\begin{equation} \label{alpha}
\alpha =  \sum_{i=0}^{(p-1)/2} \left( z^{x(1+i2^r)} + z^{y(1+i2^r)+n/2} \right) +  \sum_{j=1}^{(p-1)/2} \left( z^{x(1+j2^r+n/2)}+ z^{y(1+j2^r+n/2) + n/2}\right) .
\end{equation}
For the rest of this proof, let $\alpha = \sum_{k=0}^{n-1}C_k z^k$ be the normal form of $\alpha$. 

 Since $\varphi (\alpha) = \lambda_x + \omega^{n/2} \lambda_y $, we know that $\alpha \in \mathbb{N}G \cap \ker(\varphi)$.  By Lemma \ref{lem:corlam}, $\alpha$ must also be an element of $\mathbb{N}G\, \sigma (H_2) + \mathbb{N}G \, \sigma (H_p)$ where $H_2$ and $H_p$ are the unique subgroups of $G$ of size 2 and $p$, respectively.  Thus, we can write
\begin{equation}\label{eq:agbg}
\alpha = \sum_{g \in G} a_g\, g\, \sigma(H_2) + \sum_{g \in G}b_g \, g\, \sigma(H_p),
\end{equation}
where $a_g$, $b_g \in \mathbb{N}$.  Therefore, 
\begin{equation*}
\varepsilon(\alpha) = \sum_{g \in G}( 2a_g + p b_g) .
\end{equation*}
However, we defined $\alpha$ by an explicit formula (see equation \ref{alpha}) and can calculate the exact value of $\varepsilon(\alpha)$.  Namely $\varepsilon(\alpha) = \frac{p+1}2 \cdot 2 + \frac{p-1}2 \cdot 2 = 2p$.  Therefore, 
\begin{equation} \label{eq:inone}
\sum_{g \in G}( 2a_g + p b_g) = 2p.
\end{equation}
 So, either $a_g =0$ for all $g \in G$ or $b_g = 0$ for all $g \in G$.  This implies that either $\alpha \in \mathbb{N}G\, \sigma(H_2)$ or $\alpha \in \mathbb{N}G\, \sigma(H_p)$.

We will consider cases based on whether $p$ and 2 divide $x$.  In each case we will see that $\alpha$ must be an element of $\mathbb{N}G\, \sigma(H_2)$ and then that $x\equiv y \mod n$.\\


\emph{Case 1.}  $x$ is odd. \quad In this case, $z^{x(1+i2^r) +n/2} = z^{x(1+i2^r +n/2)}$ for all $i$.  Therefore, 
\begin{equation*}
\sum_{i=1}^{(p-1)/2} z^{x(1+i2^r)}+ z^{x(1+i2^r +n/2)} = \sum_{i=1}^{(p-1)/2} z^{x(1+i2^r)}\sigma(H_2)
\end{equation*}
Using the notation of Equation \ref{eq:agbg}, we can see that $a_{ z^{x(1+i2^r)}}$ is at least one.  Thus, $b_g$ must be zero for all $g$ and we can conclude that $\alpha \in \mathbb{N}G \, \sigma (H_2)$.
Let $\beta$ be defined by
\begin{eqnarray}
\nonumber \beta& =& \alpha - \sum_{i=1}^{(p-1)/2} z^{x(1+i2^r)}+ z^{x(1+i2^r +n/2)} \\
\label{eq:beta} &=& z^x + \sum_{i=0}^{(p-1)/2} z^{y(1+i2^r)+n/2} +  \sum_{j=1}^{(p-1)/2} z^{y(1+j2^r+n/2) + n/2}.
\end{eqnarray}
Since $\beta$ is the difference of two elements of $\mathbb{N}G \, \sigma (H_2)$, we know that $\beta$ must be an element of $\mathbb{Z}G \, \sigma (H_2)$.  Let $\beta = \sum_{k=0}^{n-1}B_k z^k$ be the normal form of $\beta$.  We can see that $B_x \geq 1$.  By Lemma \ref{lem:const}, we know that $B_{x+n/2} \geq 1$ as well.  Therefore, $z^{x+n/2} = z^{y(1+i2^r) +n/2}$ for some $0 \leq i \leq (p-1)/2$ or $z^{x+n/2} = z^{y(1+j2^r +n/2)+n/2}$ for some $1 \leq j \leq (p-1)/2$.  Which is to say,
\begin{equation*}
x \equiv y(1+i2^r) \mbox{  or  }y(1+j2^r+n/2) \mod n.
\end{equation*}
Therefore, $y$ must be odd as well, and we can conclude that $z^{y(1+i2^r)} = z^{y(1+i2^r+n/2) +n/2}$ for all $i$.  Then, 
\begin{equation*}
\sum_{i=1}^{(p-1)/2} z^{y(1+i2^r +n/2) +n/2} + z^{y(1+i2^r) +n/2} = \sum_{i=1}^{(p-1)/2} z^{y(1+i2^r)} \sigma(H_2) \in \mathbb{N}G \, \sigma(H_2).
\end{equation*}
Thus, \begin{equation*} \beta - \sum_{i=1}^{(p-1)/2} z^{y(1+i2^r +n/2) +n/2} + z^{y(1+i2^r) +n/2} =z^x +z^{y+n/2} \in \mathbb{Z}G \, \sigma(H_2).
\end{equation*}
By Lemma \ref{lem:const}, we can conclude that $z^{x+n/2} = z^{y+n/2}$, and therefore, $x \equiv y \mod n$.

\emph{Case 2}. $2|x$ \emph{and} $p|x$. \quad In this case we have 
\begin{eqnarray}
\nonumber \alpha  &=&  \sum_{i=0}^{(p-1)/2}z^{x} + z^{y(1+i2^r)+n/2} +  \sum_{j=1}^{(p-1)/2}z^{x}+ z^{y(1+j2^r+n/2) + n/2}\\
&=& p\, z^x + \sum_{i=0}^{(p-1)/2} z^{y(1+i2^r)+n/2} +  \sum_{j=1}^{(p-1)/2} z^{y(1+j2^r+n/2) + n/2}. \label{eq:p2}
\end{eqnarray}
We can see that $C_x \geq p$.  
Since $i \not\equiv j \mod p$ implies that $x +i 2^r \not\equiv x + j2^r \mod n$, we know that $C_{x+i2^r}$ refers to a distinct coefficient for all $0 \leq i < p$.  Therefore, we can conclude that
\begin{equation*}
\varepsilon (\alpha ) \geq \sum_{i=0}^{p-1}C_{x+i2r}.
\end{equation*}

If $\alpha \in \mathbb{N}G \, \sigma(H_p)$, then we could conclude that
\begin{eqnarray*}
\varepsilon (\alpha ) &\geq& \sum_{i=0}^{p-1}C_{x+i2r} \\
&=& \sum_{i=0}^{p-1}C_{x} \quad \mbox{ (by Lemma \ref{lem:const})}\\
&\geq & \sum_{i=0}^{p-1}p \\
&=& p^2\\
&>& 2p.
\end{eqnarray*}
This is a contradiction because we already know that $\varepsilon (\alpha)  = 2p$.  Therefore, $\alpha  \notin \mathbb{N}G \, \sigma (H_p)$, and we can assume that $\alpha \in \mathbb{N}G \, \sigma(H_2)$.

Since $C_x \geq p$, Lemma \ref{lem:const} tells us that $C_{x+n/2} \geq p$.  From Equation \ref{eq:p2}, we can see that for this to be true, $z^{x +n/2} = z^{y(1+i2^r)+n/2}$ for all $0 \leq i \leq (p-1)/2$ and $z^{x+n/2} = z^{y(1+j2^r+n/2) + n/2}$ for all $1 \leq j \leq (p-1)/2$.  Thus, $y(1+j2^r+n/2)+n/2 \equiv y(1+i2^r)+n/2 \mod n$, and $yp2^{r-1} \equiv y2^r(i-j) \mod n$ for all $0 \leq i \leq (p-1)/2$ and $1 \leq j \leq (p-1)/2$.  Therefore, $p$ and 2 must divide $y$ and we can then rewrite $\alpha$ as
\begin{eqnarray*}
\alpha &=& p\, z^x + \sum_{i=0}^{(p-1)/2} z^{y(1+i2^r)+n/2} +  \sum_{j=1}^{(p-1)/2}+ z^{y(1+j2^r+n/2) + n/2}\\
&=& p(z^x + z^{y+n/2}).
\end{eqnarray*}
Since $\alpha \in  \mathbb{N}G \, \sigma(H_2)$, $z^{x+n/2} = z^{y+n/2}$.  Hence, $x \equiv y \mod n$.\\
\\

\emph{Case 3}. $2|x$ \emph{and} $p \nmid x$. \quad In \emph{Case 1} we saw that if $x$ is odd, then $y$ must also be odd.  Since $x$ and $y$ were chosen arbitrarily, we can assume that $y$ must be even in this case.  In \emph{Case 2}, we saw that if $x$ is even and $p$ divides $x$, then $p$ must divide $y$.  Again, since $x$ and $y$ were chosen arbitrarily, we can assume that $p$ does not divide $y$ in this case.  Therefore, we have 
\begin{equation}\label{eq:newalpha}
\alpha  = z^x + z^{y+n/2} + \sum_{i=1}^{(p-1)/2}2z^{x(1+i2^r)} + 2z^{y(1+i2^r)+n/2}.
\end{equation}

Suppose that $\alpha \in \mathbb{N}G\, \sigma( H_p)$ (in order to arrive at a contradiction).  Since $C_{x(1+2^r)} \geq 2$, Lemma \ref{lem:const} tells us that $C_{x(1+2^r)+i2^r}\geq 2$ for all $i \in \mathbb{Z}_p$.  Since $(x,p) = 1$ we know that for all $0\leq j <p$ there exists $0 \leq i <p$ such that $i\equiv jx-x \mod n$.  Therefore, $x(1+2^r)+i2^r \equiv x(1+j2^r) \mod n$ for some $i \in \mathbb{Z}_p$.  Therefore, we can say that $C_{x(1+j2^r)} \geq 2$ for all $0 \leq j<p$.
If
\begin{equation*}
x(1+i2^r) \equiv x(1+j2^r) \mod n 
\end{equation*}
for $i \neq j \mod p$, then
\begin{equation*}
\mbox{then } 0  \equiv x2^r(j-i) \mod n.
\end{equation*}
This is a contradiction because $p$ does not divide $x$.  Therefore, the coefficients $C_{x(1+i2^r)}$ are referring to unique terms for each $0 \leq i <p$.  Then, we know
\begin{eqnarray*}
\varepsilon(\alpha) &\geq& \sum_{i=0}^{p-1} C_{x(1+i2^r)}\\
&=& \sum_{i=0}^{p-1} C_{x} \quad \mbox{ (by Lemma \ref{lem:const})}\\
&\geq& \sum_{i=0}^{p-1} 2\\
&=& 2p.
\end{eqnarray*}
Since we know that $\varepsilon (\alpha) = 2p$, we know that all inequalities must be equalities.  This implies that 
\begin{eqnarray*} 
\sum_{i=0}^{p-1} C_{x(1+i2^r)} & = &\varepsilon(\alpha)\\
&=& \varepsilon \left( \sum_{i=0}^{p-1} 2z^{x(1+i2^r)} \right) + \varepsilon \left( \alpha - \sum_{i=0}^{p-1} 2z^{x(1+i2^r)} \right)\\
&=& \sum_{i=0}^{p-1} C_{x(1+i2^r)} +  \varepsilon \left( \alpha - \sum_{i=0}^{p-1} 2z^{x(1+i2^r)} \right)\\
\end{eqnarray*}
Therefore, $ \varepsilon \left( \alpha - \sum_{i=0}^{p-1} 2z^{x(1+i2^r)} \right) =0$ and
\begin{equation}
\label{eq:alphnot}
\alpha = \sum_{i=0}^{p-1}2z^{x(1+i2^r)}.
\end{equation}  

Since $(p, 2^r) =1$ there exist $k$ and $\ell$ such that $kp = 1+\ell 2^r$.  We will choose $k$ and $\ell$ such that $0 < \ell < p$.  We will now examine sub-cases based on the size of $\ell$.
\\

\emph{Sub-case 3.1}.  $\ell \leq (p-1)/2$. \quad By Equation \ref{eq:newalpha}, we know that $C_{y(1+\ell 2^r)+n/2}\geq 2$.  So, by Equation \ref{eq:alphnot}, we know that $z^{y(1+\ell 2^r)+n/2} = z^{x(1+i2^r)}$ for some $0 \leq i < p$.  If $z^{y(1+\ell 2^r)+n/2} = z^{x(1+ \ell 2^r)}$, then Equation \ref{eq:newalpha} shows us that $C_{x(1+ \ell 2^r)} \geq 3$.  This is a contradiction to Equation \ref{eq:alphnot} since we have already established that $x(1+i2r) \not\equiv x(1+j2^r) \mod n$ whenever $i \not\equiv j \mod p$ .  Therefore $z^{y(1+\ell 2^r)+n/2} = z^{x(1+i2^r)}$ for some $i \neq \ell \mod p$.  This is to say that
\begin{equation*}
y(1+ \ell 2^r) + n/2 \equiv p(yk + 2^{r-1})\equiv x(1+ i2^r) \mod n.
\end{equation*}
We know that $p$ cannot divide $x$, and if p divides $1+i2^r$, then $i$ must be congruent to $\ell$.  Therefore, we have arrived a contradiction and we can conclude that when $0 < \ell \leq (p-1)/2$, $\alpha \notin \mathbb{N}G \, \sigma(H_p)$.
\\

\emph{Sub-case 3.2}. $(p-1)/2 < \ell < p$. \quad By Equation \ref{eq:alphnot}, we know that $C_{x(1+\ell 2^r)} = 2$.  Therefore, by Equation \ref{eq:newalpha}, we know that $z^{x(1+\ell 2^r)}$ is equal to $z^{x(1+i2^r)}$ or $z^{y(1+i2^r)+n/2}$ for some $0 \leq i \leq(p-1)/2$.  In either case, this would imply that $p$ divides $(1+i2^r)$.  This a contradiction since $i$ cannot be congruent to $\ell$ mod $p$.  Therefore, in both sub-cases, $\alpha \notin \mathbb{N}G \, \sigma(H_p)$.
\\

We can now assume that $\alpha \in \mathbb{N}G \, \sigma(H_2)$.  Since $x(1+i2^r)+a\frac{n}{2} \not\equiv x(1+j2^r) +b \frac{n}{2}\mod n$ whenever $i \not\equiv j \mod p$ for any $a,b \in \{0,1\}$ we can assume that the coefficients $C_{x(1+i2^r)}$ and $C_{x(1+i2^r) +n/2}$ are referring to unique terms for all $0 \leq i \leq (p-1)/2$.  Thus, for some $\beta \in \mathbb{N}G \, \sigma(H_2)$, we can write
\begin{eqnarray}\label{eq:sthere}
\alpha & = & \sum_{i=0}^{(p-1)/2} \left( C_{x(1+i2^r)}z^{x(1+i2^r)} +C_{x(1+i2^r) +n/2}z^{x(1+i2^r) +n/2} \right) + \beta\\
\nonumber &=& \sum_{i=0}^{(p-1)/2} C_{x(1+i2^r)} \left( z^{x(1+i2^r)} +z^{x(1+i2^r) +n/2} \right) + \beta \quad \mbox{ (by Lemma \ref{lem:const})}.
\end{eqnarray}
Therefore, 
\begin{eqnarray*}
2p = \varepsilon (\alpha) &=& \sum_{i=0}^{(p-1)/2}\left( 2 \,C_{x(1+i2^r)} \right) + \varepsilon (\beta)\\
 &\geq& 2 + 2 \cdot 2 \frac{(p-1)}{2} + \varepsilon(\beta)\quad \mbox{ (from Equation \ref{eq:newalpha}) }\\
&=& 2p + \varepsilon(\beta).
\end{eqnarray*}
Which implies that $\varepsilon(\beta) = 0$ and all inequalities must be equalities.  We can conclude that 
\begin{equation*}
\alpha =\sum_{i=0}^{(p-1)/2} C_{x(1+i2^r)} \left( z^{x(1+i2^r)} +z^{x(1+i2^r) +n/2} \right)\end{equation*}
and
\begin{equation*}
\sum_{i=0}^{(p-1)/2} C_{x(1+i2^r)} = 1 + 2 \frac{(p-1)}{2}.
\end{equation*}
Looking again at Equation \ref{eq:newalpha}, we can conclude that for these equalities to be true, $C_x = 1$ and $C_{x(1+i2^r)} = 2$ for all $1\leq i \leq{(p-1)/2}$.  Thus, $C_k =1$ iff $z^k = z^x$ or $z^k = z^{x+n/2}$.

We can now repeat the same process focusing on the $y$-terms instead of the $x$-terms.  Since $C_{y(1+i2^r)}$ and $C_{y(1+i2^r)+n/2}$ are referring to distinct terms for all $0 \leq i \leq (p-1)/2$, we can write
\begin{eqnarray*}
\alpha & = & \sum_{i=0}^{(p-1)/2} \left( C_{y(1+i2^r)}z^{y(1+i2^r)} +C_{y(1+i2^r) +n/2}z^{y(1+i2^r) +n/2} \right) + \gamma\\
&=& \sum_{i=0}^{(p-1)/2} C_{y(1+i2^r)+n/2} \left( z^{y(1+i2^r)} +z^{y(1+i2^r) +n/2} \right) + \gamma
\end{eqnarray*}
for some $\gamma$.  Using the same logic from Equation \ref{eq:sthere} onward, we will conclude that $C_k=1$ iff $z^k = z^{y+n/2}$ or $z^k = z^y$.  Therefore, $z^x$ is equal to $z^y$ or $z^{y+n/2}$.  If $z^x =z^{y+n/2}$, then $C_x \geq 2$.  This is a contradiction.  It must be the case that, $z^x = z^y$. Therefore, $x \equiv y \mod n$ in all three cases.
\end{proof}

\subsection{Non-Isomorphic}\label{sec:noniso}

In 1967, \'{A}d\'{a}m made the conjecture that $\cay{\zn, S_1}$ and $\cay{\zn, S_2}$ are isomorphic iff $S_1 = q S_2$ where $(q,n)=1$ and $qS_2 = \{ qs \mid s \in S_2 \}$ \protect \cite{Adam}.  In 1969, Elspas and Turner showed that \'Ad\'am's conjecture was true if $\cay{\zn, S_1}$ and $\cay{\zn, S_2}$ have no repeated eigenvalues \protect \cite{Elspas}.  Since we have just seen that the graphs defined in this chapter have no repeated eigenvalues, \'{A}d\'{a}m's conjecture holds.  Thus, all we need to show is that our graphs' connection sets are not equivalent by multiplication by a number relatively prime to $n$.

\begin{lem}\label{rmrk:nonequiv}
Let $n=2^rp$, where $p$ is an odd prime and $2\leq r $.  Let $A$ and $B$ be sets that depend on $r$ and $p$ as follows:
\begin{eqnarray*}
A&=&\{1+i2^r \mid  0\leq i \leq \frac{p-1}2 \} \cup \{1+j2^r +\frac{n}{2} \mid  1 \leq j \leq \frac{p-1}{2} \} \\
B &=& \{1-i2^r \mid  0\leq i \leq \frac{p-1}{2} \} \cup \{1-j2^r +\frac{n}{2} \mid  1 \leq j \leq \frac{p-1}{2} \}
\end{eqnarray*}
One of these sets will be comprised of numbers that are all relatively prime to $n$ and the other set will contain exactly two values that are divisible by $p$.
\end{lem}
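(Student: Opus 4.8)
The plan is to reduce the divisibility questions to the two prime factors of $n$ one at a time. First I would note that since $r\geq 2$, both $2^r$ and $\tfrac n2 = 2^{r-1}p$ are even, so every number listed in $A$ and every number listed in $B$ is odd. Consequently $2$ never divides a member of $A$ or $B$, and a member of $A$ or $B$ fails to be relatively prime to $n$ exactly when it is divisible by $p$. So the whole lemma reduces to counting, for each of $A$ and $B$, how many of its elements are $\equiv 0 \pmod p$.

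Next I would reduce the defining expressions modulo $p$. Because $\tfrac n2 = 2^{r-1}p \equiv 0 \pmod p$, the two ``halves'' of $A$ both reduce mod $p$ to expressions of the form $1 + i2^r$, and since $2^r$ is a unit mod $p$ there is a unique residue $i_0 \in \{1,\dots,p-1\}$ with $1 + i_0 2^r \equiv 0 \pmod p$ (note $i_0 \neq 0$, since $1 \not\equiv 0 \pmod p$). In the first half of $A$ the index runs over $0 \leq i \leq \tfrac{p-1}{2}$ and in the second half over $1 \leq j \leq \tfrac{p-1}{2}$, and in each half the indices are distinct mod $p$; hence each half contributes exactly one multiple of $p$ when $i_0 \leq \tfrac{p-1}{2}$ and none otherwise, so $A$ contains either $0$ or exactly $2$ elements divisible by $p$. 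The identical computation for $B$ produces the residue $j_0 \in \{1,\dots,p-1\}$ with $1 - j_0 2^r \equiv 0 \pmod p$; since $j_0 \equiv -i_0 \pmod p$ we get $j_0 = p - i_0$, and $B$ likewise contains $0$ or $2$ elements divisible by $p$ according as $j_0 > \tfrac{p-1}{2}$ or $j_0 \leq \tfrac{p-1}{2}$.

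Finally I would invoke the elementary fact that, $p$ being odd, $i_0 + j_0 = p$ forces exactly one of $i_0, j_0$ to lie in $\{1,\dots,\tfrac{p-1}{2}\}$ and the other in $\{\tfrac{p+1}{2},\dots,p-1\}$ (the case $i_0 = j_0 = \tfrac p2$ being excluded since $p$ is odd). Therefore exactly one of $A, B$ has two elements divisible by $p$ and the other has none, and combining this with the first paragraph yields the lemma. I do not expect a genuine obstacle here; the only care needed is the bookkeeping with the two index ranges — in particular that $i = 0$ never gives a multiple of $p$, so $0 \leq i \leq \tfrac{p-1}{2}$ behaves exactly like $1 \leq i \leq \tfrac{p-1}{2}$ for this count — and the identification $j_0 = p - i_0$. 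It is also worth checking in passing that the listed elements are pairwise distinct in $\zn$ (so ``two values'' is unambiguous); this follows because a mod-$2$ or mod-$p$ comparison separates the two halves of each set.
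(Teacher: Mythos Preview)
Your proposal is correct and follows essentially the same route as the paper. The paper's proof introduces the same unique residue (called $\ell$ there, your $i_0$) with $1+\ell 2^r\equiv 0\pmod p$, observes that $1+\ell 2^r$ and $1+\ell 2^r+\tfrac n2$ land together in whichever of $A,B$ contains the index $\ell$, and handles $B$ by rewriting it as $\{1+i2^r:\tfrac{p+1}{2}\le i\le p\}\cup\{1+j2^r+\tfrac n2:\tfrac{p+1}{2}\le j\le p-1\}$ rather than via your complementary index $j_0=p-i_0$; these are the same observation in different packaging.
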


\begin{proof}
For this proof, it is helpful to rewrite $B$ as the equivalent set mod $n$:
\begin{equation*}
B = \{1+i2^r \mid  \frac{p+1}2 \leq i \leq p \} \cup \{1+j2^r +\frac{n}{2} \mid  \frac{p+1}2 \leq j \leq p-1 \}.
\end{equation*}
Since $(p, 2^r) =1$ there exist $k$ and $\ell$ such that $kp = 1+\ell 2^r$.  We will choose $k$ and $\ell$ such that $0 < \ell < p$.  The number $1+\ell 2^r$ will be an element of either $A$ or $B$ depending on whether or not $\ell$ is greater than $(p-1)/2$.   We can also conclude that $1+\ell 2^r +n/2$, which will be in the same set as $1 + \ell 2^r$, is also divisible by $p$.  Thus, we can see that one of the sets will have at least two elements that are divisible by $p$.  Furthermore,
\begin{equation*}
1+ i 2^r  \equiv 1+ i 2^r + \frac{n}2 \not\equiv 0 \mod p\\
\end{equation*}
whenever $i \not\equiv \ell \mod p$.  Therefore, there can be no other elements of either set that are divisible by $p$.  Since all of the elements in both $A$ and $B$ are odd, we can conclude that all of the elements in both $A$ and $B$ besides $1+\ell 2^r$ and $1+ \ell 2^r +n/2$ are relatively prime to $n$.
\end{proof}
By this lemma, we can see that $A$ and $B$ cannot be equivalent via multiplication by a number relatively prime to $n$, and therefore, the results of Elspas and Turner mentioned above tell us that the circulant graphs of order n with connection sets $A$ and $B$ must not be isomorphic. We have now proved that the construction  described in the beginning of this section creates isospectral, non-isomorphic graphs.

\subsection{Extending the Construction}\label{sec:extension}

We can use the connection sets described earlier to create even more isospectral, circulant graphs of order $n$ where $n = 2^rp$ for some prime $p$.  Letting $A$ and $B$ be as define in Section \ref{sec:def}, we can create the new connection sets as follows:
\begin{eqnarray*}
\tilde{A} &=& A \cup qA\\
\tilde{B} &=& B \cup qB
\end{eqnarray*}
where $q$ is relatively prime to $n$ and $qA = \{ q\, a \mid a \in A \}$.  Now, we can use these connection sets to create two new graphs (or multigraphs), $\tilde{X} = \cay{\zn, \tilde{A}}$ and $\tilde{Y} = \cay{\zn, \tilde{B}}$. 

\begin{lem}\label{lem:newspectrum}
The graphs described above, \begin{eqnarray*} \tilde{X}=\cay{\zn, A \cup qA} \\ \tilde{Y} = \cay{\zn, B \cup qB}, \end{eqnarray*} have the same spectrum.
\end{lem}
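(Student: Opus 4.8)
The plan is to reduce everything to Theorem \ref{thm:spec_formula} together with the explicit matching of eigenvalues already established in Lemma \ref{rmrk:samespec}. Write $\lambda_t = \sum_{a\in A}\omega^{ta}$ and $\mu_t = \sum_{b\in B}\omega^{tb}$ for the eigenvalues of $X$ and $Y$ in the ordering fixed just before Lemma \ref{rmrk:samespec}. Since $\tilde A = A\cup qA$ and $\tilde B = B\cup qB$ as multisets (which is permitted, as $\tilde X,\tilde Y$ are allowed to be multigraphs), Theorem \ref{thm:spec_formula} gives, for every $x\in\zn$,
\[
\tilde\lambda_x = \sum_{s\in\tilde A}\omega^{xs} = \lambda_x + \lambda_{qx}, \qquad \tilde\mu_x = \sum_{s\in\tilde B}\omega^{xs} = \mu_x + \mu_{qx}.
\]
So it suffices to exhibit a permutation $\phi$ of $\zn$ with $\tilde\mu_{\phi(x)} = \tilde\lambda_x$ for all $x$.

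Next I would take $\phi$ to be exactly the bijection implicit in Lemma \ref{rmrk:samespec}: put $\phi(x) = x + n/2$ when $(x,n) = 2^m$ for some $m>0$, and $\phi(x) = x$ otherwise. One checks $\phi$ is a permutation: since $n/2 = 2^{r-1}p$ and $r\ge 2$, translation by $n/2$ preserves both divisibility by $p$ and parity, so $\phi$ restricts to the involution $x\mapsto x+n/2$ on the set $M = \{x : x\text{ even},\ p\nmid x\}$ and to the identity on its complement. By Lemma \ref{rmrk:samespec}, $\lambda_t = \mu_{\phi(t)}$ for every $t\in\zn$; in particular $\lambda_x = \mu_{\phi(x)}$ and $\lambda_{qx} = \mu_{\phi(qx)}$.

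The crucial step is to show that $\phi$ commutes with multiplication by $q$, i.e. $\phi(qx) = q\,\phi(x)$ for all $x$. Because $(q,n)=1$ and $n$ is even, $q$ is odd, so multiplication by $q$ preserves both divisibility by $p$ and the $2$-adic valuation; hence it maps $M$ into $M$ and its complement into its complement. On the complement $\phi$ is the identity and commuting is trivial, while on $M$ the identity $\phi(qx) = q\phi(x)$ amounts to $qx + n/2 \equiv q(x + n/2)\pmod n$, i.e. $2^{r-1}p(q-1)\equiv 0\pmod{2^r p}$, which holds precisely because $q-1$ is even. Granting this, $\tilde\mu_{\phi(x)} = \mu_{\phi(x)} + \mu_{q\phi(x)} = \mu_{\phi(x)} + \mu_{\phi(qx)} = \lambda_x + \lambda_{qx} = \tilde\lambda_x$, and since $\phi$ is a bijection of $\zn$ the spectra of $\tilde X$ and $\tilde Y$ coincide.

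I expect the only genuine content to be the commutation $\phi(qx) = q\phi(x)$, and in particular the observation that it is forced by $q$ being odd; everything else is bookkeeping with Theorem \ref{thm:spec_formula} and Lemma \ref{rmrk:samespec}. Should writing $\phi$ down cleanly prove awkward, an alternative is to re-run the three-case computation in the proof of Lemma \ref{rmrk:samespec} with $\omega$ replaced by $\omega^{q}$ throughout, but the reduction above avoids repeating that work.
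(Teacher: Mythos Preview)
Your proof is correct and follows the same strategy as the paper: express $\tilde\lambda_x=\lambda_x+\lambda_{qx}$ and $\tilde\mu_x=\mu_x+\mu_{qx}$ via Theorem~\ref{thm:spec_formula}, then invoke Lemma~\ref{rmrk:samespec}. In fact your version is more complete than the paper's, which simply stops after writing the eigenvalues in terms of $\lambda_x,\mu_x$ and says one ``can use the results of that lemma''; you have supplied the missing step, namely the verification that the bijection $\phi$ of Lemma~\ref{rmrk:samespec} satisfies $\phi(qx)=q\,\phi(x)$ (forced by $q$ odd), which is exactly what is needed to pass from the matching $\lambda_t=\mu_{\phi(t)}$ to a matching of the $\tilde\lambda$'s with the $\tilde\mu$'s.
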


\begin{proof}
Order the eigenvalues of $\tilde{X}$ as follows: let the $x^{th}$ eigenvalue of $\tilde{X}$ be 
\begin{equation*}
\tilde{\lambda}_x = \sum_{\tilde{a}\in \tilde{A}} \omega^{x \, \tilde{a}}
\end{equation*}
where $\omega$ is a primitive $n^{th}$ root of unity.  Letting $X=\cay{\zn, A}$, as described in Section \ref{sec:def}, and letting $\lambda_x$ be the $x^{th}$ eigenvalue of $X$ by the ordering described in Section \ref{sec:isospec}, we can see that 
\begin{eqnarray*}
\tilde{\lambda}_x &=& \sum_{\tilde{a}\in \tilde{A}} \omega^{x \, \tilde{a}}\\
& = &\sum_{a \in A} \omega^{x \, a} + \omega^{x \, qa}\\
&=& \lambda_x + \lambda_{qx}.
\end{eqnarray*}
Similarly, we can order the spectrum of $\tilde{Y}$ such that the $x^{th}$ eigenvalue is
\begin{equation*}
\tilde{\mu}_x = \mu_x + \mu_{qx}
\end{equation*}
where $\mu_x$ is the $x^{th}$ eigenvalue of the spectrum of $Y$ under the ordering described in Section \ref{sec:isospec}.  Thus, we have written the eigenvalues of $\tilde{X}$ and $\tilde{Y}$ in terms of the eigenvalues mentioned in Lemma \ref{rmrk:samespec}, and we can use the results of that lemma.  
\end{proof}

If we let $q=-1$ then we have two undirected graphs.  When $r>2$, the undirected graphs do not have any double edges (they are not multigraphs).  Thus, we can create a pair of undirected, isospectral circulant graphs.  These graphs can have repeated eigenvalues, and therefore we cannot use the same process used in section \ref{sec:noniso} to prove that they are not isomorphic.  However, we can prove that some of them are not isomorphic.  Musychuk proved that \'Ad\'am's conjecture (as described in the previous section) holds for graphs on $n$ vertices when either $n$, $n/2$ or $n/4$ is an odd, square-free number \protect \cite{Muz17}, \protect \cite{Muz18}.   It will still be the case that one of the connections sets (either $\tilde{A}$ or $\tilde{B}$) will contain values divisible by $p$, and the other connection set will be comprised entirely of values that are relatively prime to $n$.  Therefore, when $n=2^2p$ for any odd prime $p$, the graphs (which are actually multigraphs in this case) cannot be isomorphic.  As far as the rest of the graphs are concerned (namely, when $n=2^rp$ for $r>2$), it would be just as interesting to prove that these graphs are isomorphic as it would be to prove that they are not.  Thus, we are left with the following open problem:
\begin{qstn}
Are the graphs $\tilde{X}$ and $\tilde{Y}$ described in this section isomorphic for any values of $r$?
\end{qstn}


\begin{thebibliography}{00}

\bibitem{Adam}
{\'A}d{\'a}m, A.,
\emph{Reseach problem},
J. Combinatorial Theory 2,
1967,
229-230
\\

\bibitem{Bab}
Babai, L.,
\emph{Spectra of Cayley graphs}
J. Combin. Theory Ser. B \textbf{27},
1979, no. 2, 180-189
\\

\bibitem{Spec}
Cvetkovi{\'c}, D., Rowlinson, P., \& Simi{\'c}, S.,
\emph{Eigenspaces of graphs},
encyclopedia of Mathematics and its Applications, vol. 66,
Cambridge University Press, 
Cambridge, 1997
\\

\bibitem{Cv2}
Cvetkovi{\'c}, D. M.,
\emph{Graphs and their spectra},
Univ. Beograd. Publ. Elektrotehn. Fak. Ser. Mat. Fiz.,
1971, no. 354 -356
\\

\bibitem{Chem}
D'Amato, S. S., Gimarc, B. M., \& Trinajsti{\'c}, N.,
\emph{Isospectral and subspectral molecules},
Croat. Chem. Acta. \textbf{54},
1981,
no. 1, 1-52
\\

\bibitem{Ram}
Davidoff, G., Sarnak, P.,  \& Valette, A.,
\emph{Elementary number theory, group theory, and Ramanujan graphs},
London Mathematical Society Student Texts, 
vol. 55,
Cambridge University Press,
Cambridge, 2003
\\

\bibitem{Elspas}
Elspas, B., \& Turner, J.,
\emph{Graphs with circulant adjacency matrices}
J Combinatorial Theory 9,
1970,
297-307
\\

\bibitem{Gods-small}
Godsil, C., Holton, D., \& McKay, B.,
\emph{The spectrum of a graph},
Combinatorial mathematics, V,
Spriner, Berlin,
1977, pp. 91-117. 
Lecture Notes in Math., Vol. 622
\\

\bibitem{Gods-const}
Godisl, C. \& McKay, B. D.,
\emph{Constructing cospectral graphs},
Aequationes Math. \textbf{25},
1982,
no. 2-3, 257-268
\\

\bibitem{Gods1}
Godsil, C. \& Royle, G.,
\emph{Algebraic graph theory},
Graduate Texts in Mathematics,
vol. 207, Springer-Verlag, New York, 2001.
\\

\bibitem{Iso27}
Cai Heng Li,
\emph{Finite CI-groups are soluble},
Bull. London Math. Soc. \textbf{31},
1999, no. 4, 419-423
\\

\bibitem{Lam}
Lam, T. Y. \& Leung, K. H.,
\emph{On vanishing sums of roots of unity},
J. Algebra \textbf{224},
2000, no. 1,
91-109
\\

\bibitem{Lub}
Lubotzky, A., Samuels, B., \& Bishne, U.,
\emph{Isospectral Cayley graphs of some finite simple groups},
duke Math. J. \textbf{135}, 2006,
no. 2, 381-393
\\

\bibitem{Iso18}
Muzychuk, M., Klin, M., \& P{\"o}chel, R.,
\emph{The isomorphism problem for circulant graphs via Shur ring theory},
Codes and association schemes (Piscataway, NJ, 1999),
DIMACS Ser. Discrete Math. Theoret. Comput. Sci., vol. 56,
Amer. math. Soc., Providence, RI, 2001,
pp. 241-246.
\\

\bibitem{Muz17}
Muzychuk, M.,
\emph{\'{A}d\'am's conjecture is true in the square-free case},
J. combin. Theory Ser. A \textbf{72}, 1995, no. 1, 118-134
\\

\bibitem{Muz18}
Muzychuk, M.,
\emph{On \'{A}d\'am's conjecture for circulant graphs},
Discrete Math. \textbf{176},
1997, no. 1-3,
 \\
 
 \bibitem{Iso}
 Muzychuk, M.,
 \emph{A solution of the isomorphism problem for circulant graphs},
 Proc. London Math. Soc. (3) \textbf{88}, 2004, no. 1, 1-41
 \\
 
 \bibitem{Iso35}
 P\'{a}lfy, P. P.,
 \emph{Isomorphism problem for relational structures with a cyclic automrophism},
 European J. Combin. \textbf{8}, 1987, no. 1, 1-41
 
 
 
\end{thebibliography}
\end{document}